\title{Mean Curvature Flow of Arbitrary Co-Dimensional Reifenberg Sets}
\author{Or Hershkovits\thanks{The author was partially supported by NSF grants DMS 1406407 and DMS 1105656}}
\date{\today}
\numberwithin{equation}{section}
  \theoremstyle{plain}
 \newtheorem{theorem}[equation]{Theorem}
\newtheorem{corollary}[equation]{Corollary}
 \newtheorem{lemma}[equation]{Lemma}
 \theoremstyle{remark}
 \newtheorem{remark}[equation]{Remark}
 \theoremstyle{remark}
\theoremstyle{definition}
\newtheorem{definition}[equation]{Definition}
\newtheorem*{property_a}{Property A}
\newtheorem*{property_b}{Property B}
\newtheorem*{property_c}{Property C}
\newcommand*{\rom}[1]{\expandafter\@slowromancap\romannumeral #1@}
\newcommand{\de}{\varepsilon}
\newcommand{\spa}{\mathrm{span}}
\begin{document}
\maketitle

\begin{abstract}
We study the existence and uniqueness of smooth mean curvature flow, in arbitrary dimension and co-dimension, emanating from so called  $k$-dimensional $(\de,R)$ Reifenberg flat sets in $\mathbb{R}^n$. Our results generalize the ones from \cite{Her}, in which the co-dimension one case (i.e. $k=n-1$) was studied. For $\de$ fixed, this class is general enough to include (i) all $C^2$ sub-manifolds (ii) all Lipschitz sub-manifolds with Lipschitz constant less than $\de$ (iii) some sets with Hausdorff dimension larger than $k$. The Reifenberg condition, roughly speaking, says that the set has a weak metric notion of a $k$-dimensional tangent plane at every point and scale, but those tangents are allowed to tilt as the scales vary. We show that if the Reifenberg parameter $\de$ is small enough, the (arbitrary co-dimensional) level set flow (in the sense of Ambrosio-Soner \cite{AS}) is non fattening, smooth and attains the initial value in the Hausdorff sense. In particular, our result generalizes the one from \cite{Wan} and, in fact, all  known existence and uniqueness results for smooth mean curvature flow in arbitrary co-dimension. The largest deviation from the proof in \cite{Her} comes in the proof of uniqueness (i.e. non-fattening), where one is forced to work with the viscosity notion of the high co-dimensional level set flow from \cite{AS}, rather than Ilmanen's more geometric definition \cite{Ilm3}. This study leads to a general (short time) smooth uniqueness result, generalizing the one for evolution of smooth sub-manifolds, which may be of independent interest, even in co-dimension one.   
\end{abstract}

\section{introduction}
For a $k$-dimensional manifold  $M^k$, a family of smooth embeddings  $\phi_t:M^{k} \rightarrow \mathbb{R}^{n}$ for $t\in (a,b)$ is said to evolve by mean curvature if it satisfies the equation $\frac{d}{dt}\phi_t(x)=\vec{H}(\phi_t(x))$, where $\vec{H}$ is the mean curvature vector. If a compact submanifold  $M \subseteq \mathbb{R}^{n}$ is of type $C^2$, it follows from standard parabolic PDE theory that there exists a unique mean curvature flow starting from $M$ for some finite maximal time $T$. \\

The question of mean curvature flow (and geometric flows in general) with rough initial data, i.e. when the $C^2$ assumption is weakened, has been researched extensively (see e.g \cite{EH2},\cite{EH},\cite{Wan},\cite{Sim2},\cite{KL},\cite{Lau},\cite{Her}). For the co-dimension one, arbitrary dimensional mean curvature flow, two results form the forefront in that regard: In the case that $M$ is merely Lipschitz, short time existence was proved by Ecker and Huisken in the celebrated paper \cite{EH}. More recently, under the assumption that the initial set is $(n-1)$-dimensional $(\de,R)$  Reifenberg flat (see Definition \ref{Reif_def} ) with $\de$ sufficiently small, short time existence and uniqueness was shown in \cite{Her} (see also Theorem \ref{main_thm}). Those two results are very different in nature;  The result in \cite{EH} allows \textit{any} Lipschitz submanifold as an input, by that also dictating a local graph structure and a finite local area. The result in \cite{Her} allows some \textit{higher Hausdorff dimensional} sets which are not graphical at any scale (such as variants of the Koch-snowflake) to be taken as inputs, but it requires $\de$ to be \textit{small}. Note however that the Lipschitz assumption implies the Reifenberg property, the Reifenberg parameter $\de$ being the Lipschitz constant.\\

In the high co-dimensional case, the optimal known result, which is due to Wang, speaks about the same objects as Ecker-Huisken's result, but has the smallness character of the result in \cite{Her}. More precisely, it was shown in \cite{Wan} that there exists some $\de_0$ such that if $M$ is uniformly locally Lipschitz $k$-dimensional sub-manifold of $\mathbb{R}^n$, with Lipschitz constant less than $\de_0$ (i.e. there exists some $R>0$ such that  every point has a ball of radius $R$ around it on which the sub-manifold is an $\de_0$-Lipschitz graph) then there exists a mean curvature flow emanating from it (in light of the example in \cite{OL}, the smallness assumption in high co-dimension is necessary). By the discussion above, the high co-dimensional generalization of the result in \cite{Her}, which will be stated shortly, will form a full (qualitative) generalization of the result in \cite{Wan}. To state this result, we first need to define the objects to which it applies.            

\begin{definition}[\cite{Reif}]\label{Reif_def}
Given $\de>0$, $R>0$ and $k\in \mathbb{N}$, a compact connected set $X \subseteq \mathbb{R}^{n}$ is called \textbf{$k$-dimensional $(\de,R)$ Reifenberg flat} if for every $x\in X$ and $r<R$ there exists a $k$-dimensional plane $P_{x,r}$ passing through $x$ such that
\begin{equation}
d_H(B(x,r)\cap P_{x,r}, B(x,r)\cap X) \leq \de r.
\end{equation}  
Here $d_H$ is the Hausdorff distance.
\end{definition}
Any $C^2$ $k$-submanifold is easily seen to be $k$-dimensional $(\de,R)$ Reifenberg flat for some $\de,R>0$.  Every uniformly locally Lipschitz $k$-submanifold of $\mathbb{R}^n$ is trivially $k$-dimensional $(\de,R)$ Reifenberg flat as well. The Reifenberg condition is however general enough to include some sets with Hausdorff dimension larger than $k$ (see \cite{Tor},\cite{Her}). \\

Another notion that one needs in order to discuss evolution of non smooth initial data  is that of a weak solution to the $k$-dimensional mean curvature flow. This weak mean curvature flow is called the level set flow, as its original definition, due to Evans and Spruck \cite{ES1} and Chen-Giga-Goto \cite{CGG} (in the co-dimension one case) was via viscosity solutions for the equation of a level set of a function evolving by mean curvature.  In co-dimension one, a geometric, avoidance principle based, equivalent definition was given in \cite{Ilm3}. In high co-dimension, while a viscosity solution based definition was given in \cite{AS}, there is no effective geometric definition of weak mean curvature flow (of arbitrary sets), as even smooth flows cease to satisfy avoidance. The definition of the level set flow in \cite{AS} is technical, and will thus be postponed to Section \ref{uniq_sec}. For now, it suffices to know that the $k$-dimensional level set flow is a semi-group action of $\mathbb{R}_+$ (time) on compact sets $X\subseteq \mathbb{R}^n$, $(t,X)\mapsto X_t$ which, starting from an initial $k$-dimensional submanifold, coincides with smooth $k$-dimensional mean curvature flow, for as long as the latter is defined. Up to the specificities of this association, which will defined formally (and further investigated) in Section \ref{uniq_sec}, we can now state our main theorem. 

\begin{theorem}\label{main_thm}
There exists some $\de_0,c_0>0$ such that if $X$ is $k$-dimensional $(\de,R)$-Reifenberg flat in $\mathbb{R}^n$ for $0<\de<\de_0$ then the $k$-dimensional level set flow (in the sense of \cite{AS}) emanating from $X$ $(X_t)_{t\in (0,c_0R^2)}$ is a smooth $k$-dimensional  mean curvature flow, which further attains the initial value $X$ in the following sense: 
\begin{equation}
\lim_{t \rightarrow 0}d_{H}(X,X_t)=0.
\end{equation}
In fact, there exist some $c_1,c_2>0$ with $c_1^2<\frac{1}{8}$ and $\frac{1}{4c_1}-c_2>\sqrt{2k}$ such that the following estimates on the evolution hold:
\begin{enumerate}[label=\roman*.]
\item $|A(t)| \leq \frac{c_1}{\sqrt{t}}$.
\item $d_H(X_t,X) \leq c_2\sqrt{t}$.
\item $X_t$ has a tubular neighborhood of size $\frac{\sqrt{t}}{4c_1}$.
\end{enumerate}
\end{theorem}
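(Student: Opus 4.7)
The plan is to follow the approach of \cite{Her} in spirit, splitting the theorem into an existence/regularity part and a uniqueness part, with the main new difficulty appearing in the latter. For existence, I would first approximate the Reifenberg set $X$ by a sequence of smooth compact $k$-submanifolds $M_i$ converging to $X$ in Hausdorff distance, each of them $(2\de,R/2)$-Reifenberg flat (such smoothings can be produced by a carefully chosen mollification that respects the Reifenberg scales). Each $M_i$ admits classical smooth mean curvature flow for a short time, and the heart of the argument is to show that these smooth flows satisfy uniform estimates of the form $|A_i(t)|\leq c_1/\sqrt{t}$ and $d_H(M_i,(M_i)_t)\leq c_2\sqrt{t}$ on a common time interval $(0,c_0R^2)$, independent of $i$. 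Given such uniform estimates, a diagonal/Arzel\`a--Ascoli extraction yields a smooth limiting flow $(X_t)$ on $(0,c_0R^2)$ which attains $X$ in Hausdorff sense.

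The uniform second-fundamental-form estimate is the technical core, and I would obtain it through a \emph{Reifenberg persistence} argument: show that $(\de,R)$-Reifenberg flatness of a smooth submanifold is preserved (with mildly degrading parameters) under MCF for time of order $R^2$, and then combine this with a Reifenberg-to-curvature regularity lemma $\bigl(|A|(x,t)\leq c_1/\sqrt{t}\bigr)$ in the spirit of pseudolocality for high co-dimensional MCF \`a la \cite{Wan}. Persistence is proved by contradiction and blow-up: at a first violation one extracts, using the uniform curvature bound for earlier times, a smooth ancient limit flow which is forced to be a $k$-plane by the Reifenberg hypothesis, contradicting the failure. The tubular neighborhood estimate (iii) is then a consequence of the curvature bound and of the smallness of $\de$ via the Reifenberg property itself: at scale $\sqrt{t}/4c_1$ the submanifold is close to a plane and has small second fundamental form, forcing an embedded normal tube.

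The main obstacle, as emphasized in the abstract, is uniqueness, i.e.\ showing that the smooth limit $(X_t)$ coincides with the Ambrosio--Soner level set flow of $X$. In co-dimension one, \cite{Her} used Ilmanen's geometric definition and the avoidance principle, but in arbitrary co-dimension avoidance fails even for smooth flows, so one must work directly with the viscosity framework of \cite{AS}. My plan is to prove a general short-time uniqueness statement: if $(Y_t)$ is any smooth, properly embedded MCF with suitable curvature and tubular-neighborhood bounds of the form in (i) and (iii), and if $(Z_t)$ is an Ambrosio--Soner level set flow with $\lim_{t\to 0}d_H(Y_t,Z_t)=0$, then $Y_t=Z_t$ for small $t>0$. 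This will be done by constructing, out of $(Y_t)$, a family of admissible test functions (squared distances to $Y_t$, regularized via a convex combination with the squared distance to tubular neighborhoods, as in \cite{AS}) whose parabolic differential inequalities encode the mean curvature evolution; plugging these into the Ambrosio--Soner viscosity formulation then gives the desired squeeze. Verifying the viscosity inequalities for these specific test functions and ensuring their admissibility in the \cite{AS} sense is the delicate step, and it is precisely what the requirements $c_1^2<1/8$ and $1/(4c_1)-c_2>\sqrt{2k}$ in the statement are calibrated to enable. Once the uniqueness theorem is available, applying it to $Y_t=X_t$ (the smooth limit) and $Z_t$ (the Ambrosio--Soner flow from $X$) finishes the proof.
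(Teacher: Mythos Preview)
Your two-part architecture (smooth approximations with uniform $|A|\leq c_1/\sqrt t$ estimates and a limit for existence, then a viscosity comparison using distance-type test functions for uniqueness) matches the paper's. The execution differs in both halves.

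\textbf{Existence.} The paper does not argue by blow-up or invoke a pseudolocality principle. Instead it proves a direct Schauder-type estimate for graphical MCF on \emph{thick} space--time cylinders (Theorem~\ref{main_est}): if $u$ solves graphical MCF on $B(p,r)\times[0,\tau r^2]$ with small gradient, then $|A(p,\tau r^2)|\lesssim \de/\sqrt{\tau}r$, with constants independent of the coarse bound $M\de$ on the whole cylinder. This is combined with an elementary interpolation lemma (bounded curvature $+$ Hausdorff closeness to a plane $\Rightarrow$ graphical with small gradient, Lemma~\ref{interpolation}) and iterated over dyadic scales exactly as in \cite{Her}. Your contradiction/blow-up route could in principle work, but it rests on a ``Reifenberg-to-curvature'' pseudolocality statement for high-codimension MCF that is not available off the shelf (Wang's results assume a Lipschitz \emph{graph} structure). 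The paper's iterative PDE estimate is more self-contained and, incidentally, is exactly the ingredient you would need to justify your pseudolocality step.

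\textbf{Uniqueness.} You have the right test object, but not quite the right construction. The paper uses $v(x,t)=r(x,t)^2/\sqrt t$, with no regularization or convex combination; the computation of Lemma~\ref{dist_tub} together with $c_1^2\leq 1/8$ shows directly that $v$ is a sub-solution of \eqref{visc_eq} in the parabolic region $N=\{r<\sqrt t/(4c_1)\}$. The crucial second ingredient you do not mention is the comparison on $\partial_{\mathrm{par}}N$: this is obtained from an elementary \emph{ball-avoidance} lower bound for the level-set solution (Lemma~\ref{lower_bound}), and it is precisely here that the hypothesis $\frac{1}{4c_1}-c_2>\sqrt{2k}$ enters. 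Following the regularization scheme of \cite{AS} more literally, as you propose, is designed for smooth initial data and would run into trouble at $t=0$; the $1/\sqrt t$ scaling together with the ball-avoidance boundary estimate is exactly what bypasses that issue.
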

\begin{remark}
Recall that level set-flow should be thought of as (and in some regards is)  ``the set of all possible evolutions'' (see \cite[Sec. 10]{Ilm} and \cite[Thm. 5.4]{AS}). Thus, in addition to existence of a smooth mean curvature flow emanating from $k$-dimensional Reifenberg sufficiently flat sets, we get uniqueness in the strongest possible sense.
\end{remark}
\begin{remark}
 In light of the discussion preceding the statement of the theorem, this qualitatively generalizes the result from \cite{Wan}. As the $\de_0$ of Theorem \ref{main_thm}  is smaller than the one from \cite{Wan}, the generalization is only qualitative, i.e. there are still initial submanifolds for which the result in \cite{Wan} is applicable while Theorem \ref{main_thm} is not.     
\end{remark}
\vspace{5 mm}

The proof of theorem \ref{main_thm} is naturally divided into two parts: existence,  in which we will construct a mean curvature flow satisfying estimates $i-iii$  of  Theorem \ref{main_thm} and uniqueness, in which we will show that the resulted flow is actually the level-set flow. The proof of the existence part  goes along the same lines as the existence part of the main theorem in \cite{Her} and will be described in Section \ref{exist_sec}, where the parts that are identical will only be stated, referring to \cite{Her} for the proof. The parts which require additional work to the one done in the co-dimension one case will be treated in full.  The proof of the uniqueness, which will inhibit Section \ref{uniq_sec},  is completely different from the one in \cite{Her}. Back there, one could work almost entirely in the realm of smooth solution, utilizing inward and outward approximations \cite[Cor. 2.11]{Her} and Ilmanen's avoidance based definition of the level set flow \cite{Ilm3}. The uniform estimates on the evolution of the approximations (see \cite[Thm. 1.14]{Her} and Theorem \ref{un_est}) coupled with a separation estimate \cite[Thm. 1.20]{Her} were then used to show that the flows emanating from the inward and outward approximations remain very close, providing barriers to the level set flow. In high co-dimension there is no notion of ``inside'' and ``outside'', and, as discussed above,  there is no avoidance based definition of the level set flow. In order to show uniqueness (and in fact, even define it) we will therefore have to revert to work entirely in the viscosity solution realm.  While in co-dimension one the viscosity solution definition of the level set flow  is very well known and standard, the high co-dimensional analogue of it, introduced in \cite{AS} is far less known. Some part of our work will consist of exploring it a bit further than what was done in \cite{AS}.  

The short time uniqueness of the flow is an immediate corollary of the existence part of Theorem \ref{main_thm} and the following general strong smooth uniqueness criterion for the level set flow, which is of interest by its own right.

\begin{theorem}\label{smooth_uniq}
Let $X\subset \mathbb{R}^n$ be a connected compact set, let $c_1,c_2>0$ be constant  and let $(X_t)_{t\in(0,T]}$ be a smooth $k$-dimensional mean curvature flow, satisfying
\begin{enumerate}[label=\roman*.]
\item $|A(t)| \leq \frac{c_1}{\sqrt{t}}$.
\item $d_H(X_t,X) \leq c_2\sqrt{t}$.
\item $X_t$ has a tubular neighborhood of size $\frac{\sqrt{t}}{4c_1}$.
\end{enumerate}
Assume further that $c_1^2\leq\frac{1}{8}$ and $\frac{1}{4c_1}-c_2>\sqrt{2k}$. Then $X_t$ is the level set flow of $X$.
\end{theorem}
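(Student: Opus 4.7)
The plan is to show that the squared distance function $v(x,t) := d^2(x, X_t)$, extended to $t=0$ by $v(\cdot, 0) := d^2(\cdot, X)$, is the unique Ambrosio-Soner viscosity solution of their PDE with initial datum $d^2(\cdot, X)$. Since the AS level set flow of $X$ is, by definition (cf. Section \ref{uniq_sec}), recovered as the zero set of precisely this viscosity solution, establishing the identification above will identify $X_t$ with the level set flow of $X$.

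For every $t\in(0,T]$, hypothesis (iii) makes $v(\cdot,t)$ smooth on the open $\frac{\sqrt t}{4c_1}$-neighborhood $U_t$ of $X_t$, and hypothesis (i) provides the uniform control on the Weingarten-type terms entering $D^2 v$. A direct computation, essentially the one in \cite{AS}, then shows that $v$ satisfies the AS PDE classically on $U_t$. Outside $U_t$, $v$ is only semi-concave, but the global viscosity property can be read off from the tubular structure: any smooth $\varphi$ touching $v$ at $(x_0,t_0)$ with $x_0 \notin U_{t_0}$ is controlled by the behavior of $v$ along the normal segment from the nearest point of $X_{t_0}$, where the smooth computation already does the work. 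This is the same mechanism used in \cite{AS} to certify that a smooth MCF yields a viscosity solution of their PDE.

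The key analytic step is verifying continuity up to $t=0$ and attainment of the initial value. Hypothesis (ii) yields
\begin{equation*}
| d(x,X_t) - d(x,X) | \leq d_H(X_t, X) \leq c_2 \sqrt t,
\end{equation*}
and squaring, together with local boundedness of $d(x, X_t) + d(x, X)$, shows that $v$ is continuous on $\mathbb{R}^n \times [0, T]$. The viscosity property on $(0,T]$ combined with continuity up to $t=0$ then furnishes a continuous viscosity solution of the AS PDE with the correct initial condition, precisely in the class for which AS's comparison principle applies; that principle forces $v$ to coincide with the unique AS viscosity solution issuing from $d^2(\cdot, X)$.

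The main obstacle lies in justifying the viscosity property globally and propagating it cleanly to $t=0$ at the same time. This is exactly what the arithmetic conditions $c_1^2 \leq \frac{1}{8}$ and $\frac{1}{4c_1} - c_2 > \sqrt{2k}$ are engineered for: they guarantee that the tubular radius $\frac{\sqrt t}{4c_1}$ comfortably exceeds both the Hausdorff fluctuation $c_2 \sqrt t$ and the Gaussian scale $\sqrt{2kt}$ natural to the heat-kernel computations underlying the AS formulation. I expect the bulk of the work to consist in executing this ``touching-propagation'' argument across $\partial U_t$ and in making precise the extent to which the AS framework tolerates merely continuous initial data realized as a $t\to 0^+$ limit of classical solutions.
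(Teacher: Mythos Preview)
Your proposal contains a genuine gap: the function $v(x,t)=d^2(x,X_t)$ is \emph{not} a solution of the Ambrosio--Soner PDE, even classically inside the tubular neighborhood $U_t$. A direct computation (this is exactly Lemma~\ref{dist_tub} of the paper) gives, with $r=d(\cdot,X_t)$ and $\beta_i=\langle A(v_i,v_i),-\nabla r\rangle$,
\[
\partial_t r^2 - F(\nabla r^2,\nabla^2 r^2)=2r^2\sum_{i=1}^k\frac{\beta_i^2}{1+r\beta_i}\ \geq 0,
\]
which is strictly positive wherever $A\neq 0$. Thus $d^2(\cdot,X_t)$ is only a \emph{super}-solution in $U_t$, and the ``direct computation, essentially the one in \cite{AS}'' you invoke does not yield what you claim. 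The identification of $v$ with the unique viscosity solution therefore fails at the first step, and no amount of touching-propagation across $\partial U_t$ or continuity at $t=0$ can repair it.

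The paper's proof is built around this obstruction rather than against it. It does not try to exhibit the viscosity solution explicitly; instead it manufactures the \emph{sub}-solution $r^2/\sqrt{t}$ on the space--time region $N=\{r<\sqrt{t}/(4c_1)\}$, where the extra $-\frac{1}{4t}$ coming from $\partial_t(1/\sqrt{t})$ dominates the curvature term precisely because $c_1^2\le 1/8$. To compare this sub-solution with the actual viscosity solution $u$ (built from $d(\cdot,X)$) one needs a lower bound for $u$ on $\partial_{\mathrm{par}}N$; this is supplied by the ball-avoidance estimate of Lemma~\ref{lower_bound}, and the hypothesis $\tfrac{1}{4c_1}-c_2>\sqrt{2k}$ is exactly what makes that estimate nontrivial there. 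Comparison (Property~C) then gives $\tilde X_t\subseteq X_t$. The reverse inclusion is obtained by a completely separate, soft argument using Properties~A and~B with time-shifted initial data. Your reading of the arithmetic constraints as controlling a ``touching-propagation'' across $\partial U_t$ misidentifies their role.
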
 

Theorem \ref{smooth_uniq} is a quantitative generalization of the fact that, starting from a smooth sub-manifold,  the level set flow coincides with smooth mean curvature flow.  As in the smooth case (see \cite[Sec. 3]{AS}), the idea is to use the distance to $X_t$ to construct non-negative lower barriers to the level set equation. As conditions $i-iii$ are more precise than a smoothness  assumption, this choice should be made more quantitatively; The construction of the sub-solution, as well as the choice of the parabolic neighborhood along the boundary of which the barrier can be seen to be smaller than the solution, should reflect estimates $i-iii$. In terms of the proof, this means that as opposed to the smooth case, where one could get along by  some continuity based arguments,  in our case we will be forced to compute some quantities more explicitly, and to use avoidance of balls (which is true in arbitrary co-dimension (see Lemma \ref{lower_bound})) to get some initial estimates on the behavior of the level set equation (with the right initial data).                  
\begin{remark}
As a flow satisfying $i-iii$ provides approximations at different scales to $X$, having such smooth flow implies some regularity of $X$. This regularity is far weaker than the one assumed in Theorem \ref{main_thm} (c.f. Theorem \ref{approx_smfd}), so we expect Theorem \ref{smooth_uniq} to be applicable in other situations as well.
\end{remark}
 
\section{Existence}\label{exist_sec}
The proof of the existence of a flow $(X_t)_{t\in (0,c_0R^2)}$ satisfying estimates $i-iii$ of Theorem \ref{main_thm}  is divided,  as in the co-dimension one case of \cite{Her},  into three steps: construction of approximations at different scales (Section \ref{approx_sec}) , obtaining uniform estimates on the flows emanating from them, and passage to a limit (Section \ref{conclus_sec}). The proof of the uniform estimates in turn, consists of three major ingredients: estimates for graphical mean curvature with small initial data on thick cylinders (Section \ref{main_estimate_sec}), an interpolation lemma (Section \ref{inter_ext_sec}), and an iteration scheme (Section \ref{conclus_sec}). As stated in the introduction, in this section we will address in full the parts that require different treatment than the one in \cite{Her} and mention the parts that remain the same.

\subsection{Approximation}\label{approx_sec}

A guideline to proving estimates on a class of weak objects is to first approximate them by smooth objects, then derive estimates that depend only on quantities that are expressible for the weak objects as well, and finally pass to a limit. The first step in our case is the following approximation theorem, which is essentially from \cite[Appendix B]{KL1}, where the hypothesis used are different, but the construction remains the same. 

\begin{theorem}[\cite{KL1}]\label{approx_smfd}      
For every $\delta>0$ there exists $\de>0$ such that if $X\subseteq \mathbb{R}^{n}$ is $k$-dimensional $(\de,R)$ Reifenberg flat, then for every $0<r<R/10$ there exists a $k$ dimensional sub-manifold $X^r$ such that 
\begin{enumerate}
\item $d_H(X,X^r) \leq \delta r$.
\item $|A^r| \leq \frac{\delta}{r}$
\item For every $x\in X$, $X^r\cap B(x,r)=G\cup B$ where $G$ is connected and $B \subseteq B(x,r)-B(x,(1-\delta r))$.
\end{enumerate}
\end{theorem}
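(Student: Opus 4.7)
The plan is to adapt the partition-of-unity averaging construction of \cite[Appendix B]{KL1} to the present Reifenberg hypothesis. First I would fix a maximal $r/10$-separated net $\{x_i\}_{i \in I} \subseteq X$ and take $P_i := P_{x_i, r}$, the $k$-plane supplied by Definition \ref{Reif_def}. The Reifenberg condition readily implies that whenever $|x_i - x_j| \leq 3r$ the angle between $P_i$ and $P_j$ is $O(\de)$, since both planes must lie $\de r$-close to $X$ on overlapping balls of comparable radius. Let $\pi_i$ denote the affine orthogonal projection onto $P_i$, and let $\{\psi_i\}$ be a smooth partition of unity subordinate to $\{B(x_i, r)\}$ with $|\nabla^m \psi_i| \leq C_m r^{-m}$. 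One then sets
\[
\Phi(z) = \sum_{i} \psi_i(z)\,\pi_i(z),
\]
and defines $X^r$ locally as the graph of a small $C^2$ correction of $\Phi$ over each $P_i$; a standard fixed-point/projection argument then glues these local pieces into a single $C^\infty$ submanifold.

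The quantitative heart of the proof is the estimate $|D^2\Phi| \leq C\de/r$, which yields property (2). Naively, writing $D^2\Phi = \sum_i (D^2\psi_i)\,\pi_i + 2\sum_i D\psi_i\otimes D\pi_i$, the first term looks to be of order $1/r$; however, the identity $\sum_i \psi_i \equiv 1$ allows one to subtract any single fixed affine map from every $\pi_i$ without changing $\Phi$, so that only the pairwise differences $\pi_i - \pi_j = O(\de r)$ between nearby projections survive, producing the desired $\de/r$ bound. The Hausdorff bound (1) is then immediate from the fact that each $\pi_i$ lands on the $\de r$-approximating plane $P_i$. Both (1) and (2) then produce the claimed constant $\delta$ after one chooses $\de \leq \de(\delta)$ small enough.

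The main obstacle, and the reason the statement accommodates the remnant $B$ in (3), is ruling out unwanted components of $X^r$ inside $B(x,r)$. Inside $B(x,(1-\delta)r)$, I would show that $X^r$ is the graph of a single small $C^2$ function over $P_{x,r}$ and is therefore automatically connected; this gives the component $G$. The extraneous pieces can only arise from chart maps centered at net points $x_i$ lying just outside $B(x,r)$, whose local graphs dip into $B(x,r)$; by the Hausdorff estimate (1) and the angle control above, such intrusions penetrate at most $O(\de r)$ into $B(x,r)$, hence lie in the annulus $B(x,r)\setminus B(x,(1-\delta)r)$ once $\de \leq \delta$. The careful bookkeeping of this decomposition, and its consistency across overlapping charts, is the primary technical burden inherited from the construction in \cite{KL1}.
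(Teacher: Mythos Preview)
Your proposal shares the essential mechanism with the paper's proof: a maximal net in $X$, the approximating planes at scale $r$, a partition of unity, and the crucial observation that $\sum_i \psi_i \equiv 1$ lets one replace the individual projections by their pairwise $O(\de)$ differences, yielding the $\de/r$ curvature bound. The difference is in how $X^r$ is actually \emph{defined}. The paper averages the \emph{normal} $(n-k)$--planes $Q_i=P_{p_i,r}^\perp$ (rather than the affine tangent projections), smooths the result to an honest projection $\tilde Q_y$, and then sets
\[
\eta(y)=\frac{\sum_i\phi_i(y)\,\tilde Q_y(y-p_i)}{\sum_i\phi_i(y)},\qquad \pi(y)=(\tilde Q_y,\eta(y))\in E(n-k,n),
\]
defining $X^r$ \emph{globally} as $\pi^{-1}(\text{zero section})$ in the tautological bundle over the Grassmannian. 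Transversality of $\pi$ to the zero section (which follows from the same $C^3$ closeness estimate \eqref{closeness} that drives everything else) gives the submanifold structure in one stroke; there is no gluing. Your sentence ``a standard fixed-point/projection argument then glues these local pieces'' is exactly the step the Grassmannian packaging is designed to avoid, and as written it is underspecified --- $\Phi$ is a map $\mathbb{R}^n\to\mathbb{R}^n$, so ``the graph of a correction of $\Phi$ over $P_i$'' does not parse without saying precisely which graph. A clean way to realize your construction would be to take $X^r=\{z:\Phi(z)=z\}$, which unwinds to $\sum_i\psi_i(z)\,Q_i(z-x_i)=0$; this is essentially the paper's $\eta=0$ with the averaging of $Q_i$ done after rather than before applying to $z-x_i$. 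Finally, for part~(3): in the global construction the remnant $B$ does not come from ``chart maps centered at net points just outside $B(x,r)$'' --- there are no separate charts --- but rather from the fact that the inverse function theorem applied to $\pi_2|_{x'+Q_x}$ only pins down $X^r$ as a single graph over $P_{x,r}$ inside $B(x,2r)$, so other sheets of the globally defined $X^r$ may enter $B(x,r)$ near its boundary.
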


\begin{remark}
In \cite{Her} we utilized a stronger global approximation result from \cite{HW}, but its proof depended on mollifying the characteristic function of the domain bounded by $X$, which only works in co-dimension one.   
\end{remark}
\begin{remark}
Reifenberg's topological disk theorem \cite{Reif} follows easily from Theorem \ref{approx_smfd}; The approximations at comparable scales are graphical above one another, and composing those graphical representations yields the bi-H\"{o}lder parametrization (c.f. proof Corollary \ref{approx_smfd_add}).
\end{remark}
\begin{remark}\label{almost_same_tangents}
While the ``approximate tangents'' of a $k$-dimensional $(\de,R)$ Reifenberg flat set vary with point and scales, comparable scales and nearby points have very close approximating tangents. More precisely, for every $\delta>0$ there exists $\de>0$ such that if $X$ is $k$-dimensional $(\de,R)$ Reifenberg flat, then for every $r<R/10$ and $x_1,x_2\in X$ with $d(x_1,x_2)<r$,  both $||P_{x_1,r}-P_{x_2,r}||<\delta$ and $||P_{x_1,r}-P_{x_1,10r}||<\delta$.  Here $P_{x_1,r}$ is as in Definition \ref{Reif_def},  $||-||$ is the operator norm and we use the standard identification of a $k$ sub-space with the projection operator to it. This elementary observation is the key property of Reifenberg flat sets that leads to Theorem \ref{approx_smfd}.  
\end{remark}
\begin{proof}[Proof of Thm. \ref{approx_smfd}]
 Let $\phi:[0,\infty)\rightarrow \mathbb{R}_+$ be a smooth function such that $\phi|_{[0,1]}=1$ and $\phi|_{[2,\infty)}=0$. Let $G(n-k,n)$ be the $n-k$ dimensional Grassmanian and $E(n-k,n)$ be the total space of the tautological vector bundle over $G(n-k,m)$. Now, fix $r<R/10$ and let $p_1,\ldots, p_L\in X$ be a maximal collection of points such that the balls $B(p_i,r/6)$ are disjoint. In particular $N(X,r):=\{y\in \mathbb{R}^n\;\;\mathrm{s.t}\;\; d(y,X)<r\} \subseteq \bigcup B(p_i,2r)$. For every $i\in \{1,\ldots L\}$ fix an $n-k$ dimensional plane $Q_i=P_{p_i,r}^{\perp}$ where $P_{p_i,r}$ is as in the definition of  a Reifenberg flat set, and set $\phi_i(y)=\phi(|y-p_i|/2r)$. Now, for every $y\in N(X,r)$ define $O_y=\frac{\sum_{i=1}^{L}\phi_i(y)Q_i}{\sum_{i=1}^{L}\phi_{i}(y)}$ and note that there exists some $I$, independent of $X$ and $r$, such that for every $y\in N(X,r)$, at most $I$ of the summands in the numerator are non zero. Fixing $x\in X$ and letting $Q_x=P_{x,r}^{\perp}$  we note that for every $y\in B(x,2r)$, all the contributors to $O_y$ are in $B(x,6r)$. Thus, by Remark \ref{almost_same_tangents}, $||O_y-Q_x||_{C^3(B(x,2r))}\leq \alpha(\de)$, where $\lim_{\de\rightarrow 0}\alpha(\de)=0$. As $O_y$ is symmetric and very close to $Q_x$ , If we let $\tilde{Q}_y$ be the orthonormal projection to the span of the $n-k$ eigenvectors of $O_y$ with eigenvalues close to $1$, we also have 
\begin{equation}\label{closeness}
||\tilde{Q}_y-Q_x||_{C^3(B(x,2r))}\leq \alpha(\de).
\end{equation}
Finally, set $\eta(y)=\frac{\sum_{i=1}^{L}\phi_i(y)\tilde{Q}_y(y-p_i)}{\sum_{i=1}^{L}\phi_{i}(y)}$ and define $\pi:N(X,r)\rightarrow E(n-k,n)$ by $\pi(y)=(\pi_1(y),\pi_2(y))=(\tilde{Q}_y,\eta(y))$ and define $X^{r}$ to be the inverse image of the zero section $\xi$ in $E(n-k,n)$.

\vspace{5 mm}

Let us verify that $X^{r}$ indeed satisfies the desired properties. First observe that if $\pi_2(y)=0$ then $y\in N(X,\delta(\de))$ where $\lim_{\de \rightarrow 0}\delta(\de)=0$. Indeed, let $x\in X$ be the closest point to $y$ and observe, as before, that if $p_i$ provides a non zero contribution to $O_y$ then $p_i\in B(x,6r)$ and so by the Reifenberg property at scales $r$ and $10r$ $|Q_x(p_i)|<\delta(\de)$ and so $|Q_x(y)|<\delta(\de)$ by \eqref{closeness}. Thus $d(P_{x,r}(y),y)<\delta(\de)$ and by the Reifenberg property $d(X,y)<\delta(\de)$. Moreover, as $\pi(N(X,r))\pitchfork \xi$ (again, by \eqref{closeness}), $X^{r}$ is a $k$-dimensional sub-manifold. Fixing $x\in X$ we have that $|\tilde{Q}_x(x)|<\delta(\de)$ and so by \eqref{closeness} again, there is $y\in N(x,r)$ with $d(y,x) < \delta(\de)$ such that $\pi_2(y)=0$. Thus, we have established that $X^r$ is a sub-manifold that satisfy condition 1.

Taking $x\in X$ and $x'\in P_{x,r}\cap(B(x,r))$ we see that $||\pi_{2}|_{x'-x+Q_x}-Id||_{C^3(B(x',3r))}<\delta(\de)$ and so by the quantitative version of the the inverse function theorem, there exists a unique point $y\in (x'-x+Q_x)\cap B(x,2r)$ with $\pi_2(y)=0$. Thus $X^{r}\cap P_{x,r}^{-1}(B^{k}(x,r))\cap B(x,2r)$ is a graph of a function $f$ over $P_{x,r}\cap B^{k}(x,r)$ with $||f(x_1,\ldots, x_k)||_{C^3(B^k(x,r))}=||Q_x(x_1,\ldots, x_k, f(x_1,\ldots, x_k))||_{C^3(B^k(x,r))}<\delta(\de)$. This completes the proof.    
\end{proof}

\begin{corollary}\label{approx_smfd_add}
For every $\delta>0$ there exists $\de>0$ such that if $X\subseteq \mathbb{R}^n$ is $k$-dimensional  $(\de,R)$ Reifenberg flat set then in addition to $1-3$ of Theorem \ref{approx_smfd}, for every $x\in X$ and $s\in (r,R/10)$, $X^r\cap B(x,s)$ can be expressed as $X^r\cap B(x,s)=G\cup B$ where $G$ is connected and $B\subseteq B(x,s)-B(x,(1-5\delta)s)$.
\end{corollary}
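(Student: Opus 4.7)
The plan is to define $G$ as a single connected component of $X^r \cap B(x, s)$ that captures all ``deep interior'' points, so that everything outside $G$ is automatically in the prescribed thin annulus. Throughout, choose $\de$ small (in terms of $\delta$) so that Theorem~\ref{approx_smfd} applies with parameter $\delta$ and so that Reifenberg's topological disk theorem (cf.\ the remark after Theorem~\ref{approx_smfd}) yields a bi-H\"{o}lder parametrization of $X \cap B(x, R/10)$, ensuring that $X \cap B(x, s)$ is path-connected for every $s \leq R/10$, with short connecting paths staying in slightly enlarged balls.

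Let $x^{\ast} \in X^r$ be the point of $X^r$ nearest to $x$, which satisfies $|x - x^{\ast}| \leq \delta r$ by property~1 of Theorem~\ref{approx_smfd}. Define $G$ to be the connected component of $X^r \cap B(x, s)$ containing $x^{\ast}$ and $B := (X^r \cap B(x, s)) \setminus G$. To conclude, it suffices to show $X^r \cap B(x, (1-5\delta)s) \subseteq G$. Fix $y$ in this set; property~1 gives $x' \in X$ with $|y - x'| \leq \delta r$, hence $x' \in X \cap B(x, (1-4\delta)s)$.

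The main step is to exhibit a continuous arc from $y$ to $x^{\ast}$ inside $X^r \cap B(x, s)$. Using the bi-H\"{o}lder structure of $X$, pick a continuous path $\gamma : [0, 1] \to X \cap B(x, (1-2\delta)s)$ with $\gamma(0) = x'$ and $\gamma(1) = x$, then lift $\gamma$ to $X^r$ via nearest-point projection. The curvature bound $|A^r| \leq \delta / r$ from property~2, combined with the local graph description of $X^r$ over the Reifenberg planes $P_{\cdot, r}$ established in the proof of Theorem~\ref{approx_smfd}, shows that $X^r$ has reach of order $r/\delta$, well in excess of $\delta r$. Consequently, the nearest-point projection $\pi_{X^r}$ is well-defined and continuous on the $\delta r$-tubular neighborhood of $X$, and setting $\gamma^r(t) := \pi_{X^r}(\gamma(t))$ gives a continuous path in $X^r$ with $|\gamma^r(t) - \gamma(t)| \leq \delta r$, so $\gamma^r(t) \in B(x, (1-2\delta)s + \delta r) \subseteq B(x, s)$ and $\gamma^r(1) = x^{\ast}$.

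It remains to connect $y$ to $\gamma^r(0)$ in $X^r \cap B(x, s)$: both points lie within $\delta r$ of $x'$, and $X^r \cap B(x', \delta r)$ is a small piece of the nearly-flat Lipschitz graph of $X^r$ over $P_{x', r}$ (again from the proof of Theorem~\ref{approx_smfd}), hence connected and contained in $B(x', \delta r) \subseteq B(x, s)$. Concatenating this short arc with $\gamma^r$ produces a continuous path from $y$ to $x^{\ast}$ inside $X^r \cap B(x, s)$, showing $y \in G$. The main technical difficulty is the reach control used in the lifting step, which requires synthesizing the curvature estimate $|A^r| \leq \delta/r$ with the local graph description of $X^r$ furnished by the construction in the proof of Theorem~\ref{approx_smfd}; a secondary point is extracting from Reifenberg's topological disk theorem a bi-H\"{o}lder parametrization of a disk of radius exceeding $s$, which is where the smallness of $\de$ must be absorbed.
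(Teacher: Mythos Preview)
Your argument is essentially correct but takes a genuinely different route from the paper's. The paper never works with paths in $X$ or with the projection $\pi_{X^r}$; instead it uses the full family of approximations at the intermediate scales $s,\,s/4,\,s/16,\ldots,r$. From properties (1)--(3) of Theorem~\ref{approx_smfd} (together with Lemma~\ref{interpolation}) one gets that each $X^{s/4^{j+1}}$ is a normal graph over $X^{s/4^j}$ with displacement at most $2\delta\cdot 4^{-j}s$; composing these graphs gives a homeomorphism $f:X^s\to X^r$ that moves points by at most $\sum_j 2\delta\cdot 4^{-j}s<4\delta s$. Since property~(3) already gives the desired decomposition for $X^s\cap B(x,s)$, pushing it forward by $f$ yields the decomposition for $X^r\cap B(x,s)$.

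Your approach instead lifts paths from $X$ directly to the single approximation $X^r$, which is conceptually more direct but costs you two outside ingredients: Reifenberg's topological disk theorem (to get path-connectedness of $X$ in balls, with quantitative control on how far paths stray), and a reach bound for $X^r$. On the latter, one caution: the curvature estimate $|A^r|\le\delta/r$ alone does not give reach of order $r/\delta$, since it does not preclude distinct sheets of $X^r$ coming close; what properties (1)--(3) actually yield (and what the paper states) is a tubular neighborhood of radius comparable to $r$, which is still ample for projecting a path that stays within $\delta r$ of $X^r$. The paper's multiscale telescoping avoids both of these external inputs and stays entirely within the framework of the approximations themselves, at the price of invoking $X^{s/4^j}$ for all $j$ rather than just $X^r$.
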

\begin{proof}
Just like in \cite[Lemma 4.4 and Lemma 4.9]{Her} (see also Lemma \ref{interpolation}), conditions (1)-(3) of Theorem \ref{approx_smfd} imply that for $\de>0$ sufficiently small, $X^s$ has a tubular neighborhood of radius $s/4$ and that $X^{s/4}$ is a graph of a function $f_s$  over $X^s$ with $|f_s(x)| \leq 2\delta s$. Defining $f:X^s\rightarrow X^{4^{-j}s}$ by $f(y)=f_{4^{-k+1}s}\circ f_{4^{-k+2}s} \circ \ldots \circ f_s(y)$ we see that for every $y\in X^s$, $d(f(y),y)< 4\delta s$. This, together with property (3) for $X^s$ completes the proof.
\end{proof}

\subsection{Estimates for Graphical Mean Curvature Flow with Small Initial Data on Thick Cylinder}\label{main_estimate_sec}
In this section, we will generalize the proof of the main estimate for graphical mean curvature flow   \cite[Thm. 5.1]{Her} to the arbitrary co-dimensional case.

\begin{theorem}\label{main_est}
There exist some $c>0$ such that for every $\delta>0$ and $M>0$, there exist positive $\tau_0=\tau_0(M,\delta)<<1$ and $\lambda_0=\lambda_0(M,\delta)<<1$ such that for every $0<\lambda<\lambda_0$ there exists  some  $\de_0=\de_0(\delta,M,\lambda)$ such that for every $0<\tau<\tau_0$  and $\de<\de_0$ the following holds:

\noindent If $u:B^k(p,r)\times [0,\tau r^2]\rightarrow \mathbb{R}^{n-k}$ is a graph moving by mean curvature such that:
\begin{enumerate}
\item For every $(x,t) \in B(p,r)\times [0,\tau r^2]$ 
\begin{equation}
|\nabla u(x,t)| \leq M\de,\;\;\;\;\;|u(x,t)| \leq M^2\beta.  
\end{equation}
\item  For every $x \in B(p,r)$ we have
\begin{equation}
|\nabla u(x,\lambda \tau r^2)| \leq \de,\;\;\;\;\; |u(x,\lambda \tau r^2)| \leq \beta.
\end{equation}
\end{enumerate}
Then:
\begin{equation} 
|A(p,\tau r^2)| \leq (1+\delta)\frac{1}{\sqrt{\pi}}\frac{\de}{\sqrt{\tau} r},\;\;\;\;\;|A(p,\tau r^2)| \leq c\frac{\beta}{\tau r^2}+\delta\frac{\de}{\sqrt{\tau} r}.
\end{equation}
\end{theorem}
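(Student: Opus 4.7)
My plan is to first reduce, by the parabolic rescaling $x\mapsto x/r$, $t\mapsto t/r^2$, to the case $r=1$. In this normalization, the graphical mean curvature flow for $u:B^k(p,1)\times[0,\tau]\to\mathbb{R}^{n-k}$ reads componentwise
\begin{equation}
\partial_t u^\alpha = g^{ij}(Du)\,\partial_{ij}u^\alpha,\qquad g^{ij}(Du) = \delta^{ij} + O(|Du|^2),
\end{equation}
and by condition~1, together with $\de<\de_0$ small, $|Du|\leq M\de$ holds on the whole cylinder. Hence the system is an $O((M\de)^2)$-perturbation of the decoupled heat equation acting on the components of $u$, and this is what drives the entire argument.

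Next, I would use condition~2 at time $\lambda\tau$ together with interior parabolic Schauder estimates on the slab $[\lambda\tau/2,\lambda\tau]$ (where the equation is uniformly parabolic with smooth coefficients controlled by $M\de$) to obtain $|D^2u|,|D^3u|\leq C(M)\de$ at time $\lambda\tau$ on a slightly smaller ball. The role of the free parameter $\lambda$ is precisely to buy this short regularization interval before the sharp estimate starts running. I would then rewrite the equation on $[\lambda\tau,\tau]$ as
\begin{equation}
\partial_t u^\alpha - \Delta u^\alpha = E^\alpha(x,t),\qquad |E^\alpha|\leq C|Du|^2\,|D^2u|,
\end{equation}
and represent $\partial_i u^\alpha$ via Duhamel's formula against the Euclidean heat kernel $K(\cdot,t-\lambda\tau)$, localized with a spatial cutoff to compensate for the fact that the equation is only defined on $B(p,1)$.

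Differentiating this Duhamel representation once more in space, the sharp constant $1/\sqrt{\pi}$ emerges from the elementary identity $\int_{\mathbb{R}^k}|\partial_iK(z,t)|\,dz = 1/\sqrt{\pi t}$. Applying $|\nabla u(\cdot,\lambda\tau)|\leq\de$ from condition~2 inside the integral, the linear term contributes $\de/\sqrt{\pi(1-\lambda)\tau}$, while the nonlinear Duhamel term and the spatial-cutoff error can both be absorbed into an extra $\delta\cdot\de/\sqrt{\tau}$ by choosing $\tau_0,\lambda_0,\de_0$ small enough. Since $|A(p,\tau)| = (1 + O(|Du|^2))\,|D^2u(p,\tau)|$ at small gradient, this yields the first estimate. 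For the second estimate I would run the same Duhamel argument but differentiate the kernel twice and use the $C^0$-bound $|u(\cdot,\lambda\tau)|\leq\beta$ in place of the gradient bound: standard parabolic estimates applied to $u - u(p,\lambda\tau)$ then give $|D^2u(p,\tau)|\leq c\beta/((1-\lambda)\tau) + \delta\cdot\de/\sqrt{\tau}$, and absorbing $(1-\lambda)^{-1}$ into $c$ (via $\lambda\leq 1/2$) completes that part.

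The main obstacle I anticipate is controlling the spatial cutoff in the Duhamel representation: the contribution to $D^2u(p,\tau)$ of the heat kernel tail outside $B(p,1)$ must be dominated by $\delta\cdot\de/\sqrt{\tau}$. This forces $\tau_0$ to be small — so that Gaussian tails at distance $1$ from $p$ are tiny on the timescale $\tau$ — and is handled using the weaker but global bounds of condition~1. This also explains why both sets of hypotheses are needed: the global bounds of condition~1 take care of boundary effects, while the sharper intermediate-time bounds of condition~2 produce the leading-order estimate with the correct constant.
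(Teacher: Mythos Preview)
Your outline correctly identifies the overall structure---treat the equation as a perturbation of the heat equation, isolate the sharp constant $1/\sqrt{\pi}$ from the kernel, and absorb the nonlinearity and boundary tails using smallness of $\tau_0$ and $\de_0$. This is exactly the philosophy of the paper. However, there is a real gap in the step where you claim ``the nonlinear Duhamel term \ldots\ can be absorbed into an extra $\delta\cdot\de/\sqrt{\tau}$.''

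The issue is this. Whichever way you set up the Duhamel representation for $D^2u(p,\tau)$, you end up integrating either $D^2K$ against $E$ or $\partial_jK$ against $\partial_iE$ over $[\lambda\tau,\tau]$. In the first case $\int|D^2K(\cdot,t)|\,dy\sim 1/t$, which is not integrable near $s=\tau$ and cannot be controlled by a pointwise bound on $E$; this is the classical reason Schauder theory requires H\"older control on the source. In the second case you need a uniform-in-$s$ bound on $|\partial_iE|$, which involves $D^3u$ at \emph{all} times $s\in[\lambda\tau,\tau]$. You only established $|D^2u|,|D^3u|\leq C(M)\de$ at the single time $\lambda\tau$; no mechanism is given to propagate this to later times, and standard interior estimates only yield $|D^2u(\cdot,s)|\leq C/\sqrt{s}$, which is not small in $\de$ and still leaves the $D^3u$ term uncontrolled.

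This is precisely the difficulty the paper isolates as the main new point in high codimension. In codimension one, the needed H\"older control on $\nabla u$ (hence on $N$) came from the De Giorgi--Nash--Moser theory for scalar quasilinear equations, which is unavailable for systems. The paper replaces it by a compactness argument (their H\"older gradient bound) showing that for every $\delta'$ there is $\de_0$ so that solutions with $\|Du\|<\de_0$ satisfy $d_z\|D^2u(z)\|<\delta'$ and a weighted $C^\alpha$ bound on $Du$; this in turn gives the weighted H\"older smallness of $N$ that feeds into the Schauder-type estimate. Your Duhamel sketch can be made to work, but only after inserting exactly this ingredient; without it the nonlinear integral does not close.
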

As in the proof of the co-dimension one case, the idea is to regard the graphical mean curvature flow equation as a non-homogeneous heat equation. The controlled growth of the function and its gradient (condition 1 in Theorem \ref{main_est}),  and the thickness of the cylinder ($\tau(M)<<1$) allows one to derive a Schauder type estimate for the non-homogeneous heat equation \cite[Thm. 5.12]{Her} for which the homogeneous part ``does not see the boundary'' and depends only on the initial slice (i.e. on $\de,\beta$, but not on $M$). In that regard, the estimate one gets for the homogeneous part are (up to a multiplicative constant) like the ones obtained for physical solutions to the heat equation on the full space.  Proving Theorem \ref{main_est} then reduces to showing that  H\"{o}lder norm of the non-linearity behaves sub-linearly. 

In the co-dimension one case, the major step towards obtaining those estimates was a H\"{o}lder gradient estimate for $u$, which was proved by tracing the dependences of the constants in the proof of H\"{o}lder gradient regularity for parabolic quasilinear equations of general type \cite{Lie}. This led to showing that some H\"{o}lder norm of $\nabla u$ is at most linear in the $C^0$ norm of $\nabla u$ (when the latter is small). Such argument is not valid in the high co-dimensional case (as there is no such general H\"{o}lder gradient estimate), but by virtue of a compactness argument, one gets a weaker result, which will nevertheless suffice for our purposes.

\begin{theorem}\label{holder_grad_bds}
There exists some $\tau_0>0$ such that for every $\delta>0$ there exists $\de>0$ such that for every $0<\tau<\tau_0$ the following holds: If $u:B^k(p,r)\times [0,\tau r^2]\rightarrow \mathbb{R}^{n-k}$  is a solution to the graphical mean curvature flow with $||Du||<\de$ then, setting $B^{\tau}(p,r)=B(p,(1-1000\sqrt{\tau_0})r)\times [0,\tau r^2]$
\begin{equation}
\sup_{z_1,z_2\in B^{\tau}(p,r)}d_{z_1,z_2}^\alpha\frac{||Du(z_1)-Du(z_2)||}{d(z_1,z_2)^{\alpha}}<\delta,\;\;\;\;\;\; \sup_{z\in B^{\tau}(p,r)}d_{z}\frac{||D^2u(z)||}{d(z_1,z_2)}<\delta.
\end{equation}
Here
\begin{equation}
d((x_1,t_1),(x_2,t_2))=\sqrt{|x_1-x_2|^2+|t_1-t_2|},
\end{equation}
$d_{z_1}=d(z_1,\partial(B(p,r)\times [0,\tau r^2]))$ (note that this is \underline{not} the distance to the boundary of $B^{\tau}(p,r)$) and $d_{z_1,z_2}=\min(d_{z_1},d_{z_2})$.
\end{theorem}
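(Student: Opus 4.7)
The plan is a blow-up / contradiction argument. The direct constant-tracing of Lieberman's quasilinear H\"older gradient estimate used in the co-dimension one case \cite{Her} is unavailable here, since no general H\"older gradient estimate for parabolic systems is known in higher codimension. Replacing it by a qualitative compactness argument is exactly what produces the non-quantitative form of Theorem~\ref{holder_grad_bds} --- for every $\delta$ some $\de$ --- rather than an explicit rate.

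Suppose the statement fails. Then for some $\delta_0>0$ there are sequences $\tau_n\in(0,\tau_0)$, $\de_n\to 0$, and graphical mean curvature flows $u_n:B^k(p_n,r_n)\times[0,\tau_n r_n^2]\to\mathbb{R}^{n-k}$ with $\|Du_n\|_{C^0}<\de_n$, for which at least one of the two weighted seminorms in the conclusion exceeds $\delta_0$ somewhere in $B^{\tau_n}(p_n,r_n)$. Translate and parabolically rescale to $p_n=0$, $r_n=1$, and set $v_n:=(u_n-u_n(0,0))/\de_n$. Then $\|Dv_n\|_{C^0}\leq 1$, $v_n(0,0)=0$, and $v_n$ satisfies
\begin{equation}
\partial_t v_n=g^{ij}(\de_n Dv_n)\,\partial_i\partial_j v_n,
\end{equation}
with $g^{ij}$ smooth and $g^{ij}(0)=\delta^{ij}$; hence the coefficients $a_n^{ij}:=g^{ij}(\de_n Dv_n)$ are a uniform $C^0$-perturbation of $\mathrm{Id}$ of size $O(\de_n)$. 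Both weighted seminorms are degree-one-homogeneous in $u$, so the division by $\de_n$ turns the hypothesized violation of size $\delta_0$ for $u_n$ into a violation of size $\delta_0/\de_n\to\infty$ for $v_n$.

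The aim is now to establish uniform-in-$n$ interior regularity for $v_n$. Viewing each scalar component as solving a linear uniformly parabolic equation with bounded measurable coefficients $a_n^{ij}$, Krylov--Safonov applied to the spatial derivatives $Dv_n$ (whose $L^\infty$-norm is $\leq 1$) yields a uniform interior $C^{\alpha'}$ bound on $Dv_n$ for some universal exponent $\alpha'$. A Schauder bootstrap --- enabled by the near-constancy of $a_n^{ij}$ and the ensuing bound on $\|a_n^{ij}\|_{C^{\alpha'}}$ --- upgrades this to a uniform interior $C^{2,\alpha'}$ estimate for $v_n$. Converting interior bounds at scale $d_z$ into bounds for the weighted seminorms is then standard parabolic scaling: for the H\"older quotient one splits according to whether $d(z_1,z_2)$ is comparable to $d_{z_1,z_2}$ or much smaller, applying the interior $C^{1,\alpha'}$ estimate in a parabolic ball of radius $d_{z_1,z_2}/2$ around the midpoint; for the $D^2$ quantity one applies the interior $C^2$ estimate at scale $d_z$ directly. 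This produces uniform bounds $C$ for the two weighted seminorms of $v_n$.

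The uniform bound $C$ for the weighted seminorms of $v_n$ contradicts the blow-up $\delta_0/\de_n\to\infty$ forced by the failure hypothesis, closing the argument. The chief obstacle throughout is the uniform interior $C^{1,\alpha'}$ estimate for the $v_n$: in higher codimension the required H\"older gradient regularity for quasilinear parabolic systems is not available in general, and it is rescued here only by the smallness $\de_n\to 0$, which places $a_n^{ij}$ in a perturbative regime where Krylov--Safonov combined with a Schauder bootstrap suffices. This perturbative character is precisely why the conclusion can only be phrased qualitatively, and why it will be coupled in Section~\ref{main_estimate_sec} with the thick-cylinder Schauder-type estimate to yield the quantitative control of Theorem~\ref{main_est}.
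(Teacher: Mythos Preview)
Your contradiction-and-compactness framework is correct and is indeed what the paper uses, but the implementation diverges from the paper's and your key regularity step contains a gap.

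The gap is the sentence ``Krylov--Safonov applied to the spatial derivatives $Dv_n$ \ldots\ yields a uniform interior $C^{\alpha'}$ bound on $Dv_n$.'' Krylov--Safonov gives H\"older continuity of \emph{solutions} of scalar non-divergence equations, not of their derivatives. Differentiating $\partial_t v_n^\alpha = a_n^{ij}\,\partial_i\partial_j v_n^\alpha$ in $x_l$ produces the extra term $(\partial_l a_n^{ij})\,\partial_i\partial_j v_n^\alpha$, and since $a_n^{ij}=g^{ij}(\de_n Dv_n)$ this term equals $\de_n\cdot(\partial g/\partial p)\cdot D^2v_n\cdot D^2v_n^\alpha$ --- quadratic in the not-yet-controlled Hessian. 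That is exactly the system-level obstruction you correctly flag in your opening paragraph; invoking Krylov--Safonov here does not circumvent it. (A repair is available: since $a_n^{ij}$ is $L^\infty$-close to $\delta^{ij}$, a Cordes--Nirenberg $W^{2,p}$ estimate plus Sobolev embedding gives $Dv_n\in C^{\alpha'}$, after which your Schauder bootstrap goes through.)

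The paper's route avoids both the division by $\de_n$ and any need to re-derive interior estimates. It first observes that the H\"older seminorm bound follows from the $D^2$ bound by integration, so only the latter must be contradicted. Assuming $d_{z_m}\|D^2u_m(z_m)\|\geq\delta$ with $\|Du_m\|<1/m$, it performs a purely parabolic rescaling by $\lambda_m=\sqrt{\tau_0/t_m}$ about $z_m$, landing on the fixed cylinder $B(0,1000\sqrt{\tau_0})\times[0,\tau_0]$ (this is where the constant $1000\sqrt{\tau_0}$ in the definition of $B^\tau(p,r)$ is used). The rescaled maps are still graphical mean curvature flows with $\|Dv_m\|<1/m$, so Wang's interior estimates \cite{Wan} apply as a black box and give compactness. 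A subsequential limit has $Du\equiv 0$ (hence is constant) yet $\|D^2u(0,\tau_0)\|\geq\delta/\sqrt{\tau_0}>0$ --- contradiction. In short, rather than normalising $Du$ to size $1$ and fighting for estimates on a perturbed heat equation, the paper keeps $Du$ small, invokes \cite{Wan} for compactness, and reads the contradiction off the rigidity of the limit.
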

\begin{proof}
We argue by contradiction. Assume w.l.g that $r=1$ and note that the first inequality is trivial when $d(z_1,z_2) \geq d_{z_1,z_2}$ and follows by integration from the second inequality otherwise. Suppose that there exist some $\delta$ such that for every $m$ we can find $\tau^m<\tau_0$ and a solution of the graphical MCF $u^m:B^k(x,r)\times [0,\tau^m r^2]\rightarrow \mathbb{R}^{n-k}$ with $||Du^m||<1/m$ and $z_m \in B^{\tau^m}(p,r)$ such that 
\begin{equation}
d_{z_m}||D^2u^m(z_m)||\geq\delta.
\end{equation}
Setting $z_m=(x_m,t_m)$, the closest boundary point to $z_m$ is $(x_m,0)$. Let $\lambda_m=\sqrt{\frac{\tau_0}{t_m}}\geq 1$ and define 
\begin{equation}
v^m(y,s)=\lambda_m\left[u^m(x_m+y/\lambda_m,s/\lambda_m^2)-u^m(x_m,0)\right].
\end{equation} 
Note that $v^2_m(0,0)=0$ and set $\xi_m=(y_m,s_m)=(0,\tau_0)$ . By the definition of $B^{\tau^m}(p,r)$, we conclude that $v^m$ is defined (at least) on $B(0,1000\sqrt{\tau_0})\times [0,\tau_0]$, satisfies the graphical mean curvature flow equation, and while $||Dv^m||\leq 1/m$, for $\xi_m =(0,\tau_0)$ we have  
\begin{equation}
||D^2v^m(\xi^m)||\geq \delta/\sqrt{\tau_0}.
\end{equation}
On the other hand, by the estimate from \cite{Wan}, the sequence sub-converges to a solution of the graphical mean curvature flow which is on one hand constant and on the other, has non-vanishing second derivative.  
\end{proof}  

Now, the graphical mean curvature equation has the form
\begin{equation}
\partial_t u -\Delta u=a^{ij}\frac{\partial^2 u}{\partial x_i\partial x_j}=N(Du,D^2u)
\end{equation}
where
\begin{equation}
a^{ij}=\left[\left(\delta_{kl}+\left<\frac{\partial u}{\partial x_k},\frac{\partial u}{\partial x_l}\right>\right)^{-1}\right]^{ij}-\delta^{ij}.
\end{equation}
Note that $a^{ij}$ is a rational function in the gradient of $u$, where the numerator $P(Du)$ has neither free coefficients, nor terms that are linear in $Du$. Thus, by Theorem \ref{holder_grad_bds} (and the estimates from \cite{Wan}), for every $\tau<\tau_0$ and every $\delta$ there exists $\de_0>0$ such that for every $\tau<\tau_0$, for every solution of the graphical MCF on $B(p,r)\times [0,\tau r^2]$ with $||Du||<\de$ we have, for every $x,y\in B^{\tau}(p,r)$
\begin{equation}
d_x|N(x)| \leq \delta \de,\;\;\;\;\; d_{x,y}^{1+\alpha}\frac{|N(x)-N(y)|}{d(x,y)^{\alpha}} \leq \delta \de.
\end{equation}
As discussed above, those estimates for the non-linearity, together with\cite[Thm. 5.12]{Her} imply Theorem \ref{main_est}.
  
\subsection{Extension and Interpolation}\label{inter_ext_sec}
In this section we include two very simple result. The first regards the extension of curvature bounds forward in time for arbitrary compact submanifolds, while the second is an interpolation result, at the presence of curvature bounds and Hausdorff bounds.

By \cite[Lemma 2.1]{Wan}, the evolution of the second fundamental form in arbitrary co-dimension satisfies the inequality
\begin{equation}
\frac{d}{dt}|A|^2 \leq \Delta |A|^2+ C(k,n)|A|^4.
\end{equation}
Thus, by the maximum principle, and the fact that the curvature must blow up at a singularity, one has the following extension lemma:
\begin{lemma}\label{extension}
If $M$ is a $k$-dimensional mean submanifold of $\mathbb{R}^n$ with $|A|\leq \alpha$ then there exist a mean curvature flow $M_t$ starting from $M$ that exists for (at least) $0\leq t \leq \frac{1}{C(k,n)\alpha^2}$, such that the norm of the second fundamental form satisfies the estimate
\begin{equation}
|A(t)| \leq \frac{\alpha}{\sqrt{1-C(k,n)\alpha^2t}}.
\end{equation}
\end{lemma}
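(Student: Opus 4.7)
The plan is to apply the parabolic maximum principle to the scalar inequality $\partial_t |A|^2 \leq \Delta |A|^2 + C(k,n)|A|^4$ on the evolving submanifold, compare the resulting differential inequality for $\sup_{M_t} |A|^2$ with an explicit ODE, and then use the standard fact that bounded second fundamental form allows MCF to be extended.

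First, since $M$ is a smooth compact $k$-dimensional submanifold, standard parabolic PDE theory gives short-time existence of a smooth mean curvature flow $M_t$ starting from $M_0=M$ on some maximal time interval $[0,T_{\max})$. Set $f(t) = \sup_{x\in M_t} |A(x,t)|^2$, which is finite and continuous on $[0,T_{\max})$ by compactness and smoothness. The evolution inequality stated in the excerpt, $\partial_t |A|^2 \leq \Delta_{M_t}|A|^2 + C(k,n)|A|^4$, combined with the parabolic maximum principle on the compact evolving manifold $M_t$, shows that $f$ satisfies the differential inequality $f'(t) \leq C(k,n)\, f(t)^2$ in the sense of upper Dini derivatives, with $f(0) \leq \alpha^2$.

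Comparing $f$ with the ODE solution $g(t) = \frac{\alpha^2}{1-C(k,n)\alpha^2 t}$ to $g' = C(k,n) g^2$, $g(0)=\alpha^2$, yields
\begin{equation}
f(t) \leq \frac{\alpha^2}{1-C(k,n)\alpha^2 t}, \qquad \text{i.e.}\qquad |A(t)| \leq \frac{\alpha}{\sqrt{1-C(k,n)\alpha^2 t}},
\end{equation}
for all $t \in [0,T_{\max}) \cap [0, 1/(C(k,n)\alpha^2))$.

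Finally, to guarantee that the flow actually exists up to time $1/(C(k,n)\alpha^2)$, I invoke the well-known continuation criterion: if $\sup_{t \in [0,T)} \sup_{M_t} |A|$ stays finite, then $T < T_{\max}$ and the flow extends past $T$. Since the curvature bound above is finite on every closed subinterval of $[0, 1/(C(k,n)\alpha^2))$, we must have $T_{\max} \geq 1/(C(k,n)\alpha^2)$, completing the proof. The only part that requires care is making sure the maximum principle is applied correctly on an evolving manifold (handled either by a Hamilton-type ODE-comparison argument on $|A|^2$ or by testing at a space-time maximum); the continuation criterion is completely standard. I do not expect any substantive obstacle.
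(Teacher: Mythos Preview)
Your proposal is correct and follows essentially the same approach as the paper: the paper derives the lemma directly from the evolution inequality $\partial_t|A|^2 \leq \Delta|A|^2 + C(k,n)|A|^4$, the maximum principle, and the blow-up criterion at a singularity, which is exactly what you do (with the ODE comparison spelled out). There is nothing to add.
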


The following elementary interpolation is central in our argument:
\begin{lemma}[Interpolation]\label{interpolation}
Fix $\delta>0,\alpha>0$. There exists $\beta_0>0$ such that for every $\beta<\beta_0$ the following holds: Assume $p\in X\subseteq B^n(p,r)$ where $X$ is a $k$-submanifold with
\begin{enumerate}
\item $|A| \leq \frac{\alpha}{r}$ .
\item $d_H(P\cap B^n(p,r),X \cap B^n(p,r)) \leq \beta r$ for $P=\spa\{e_1,\ldots,e_k\}$.
\end{enumerate}
 Then inside the cylinder $C_{\delta,\beta}=B^k(p,(1-\delta)r)\times [-\beta r,\beta r]^{n-k}$, the connected component of $p$ is a graph of a function $u$ over $P$  and we have the estimate
\begin{equation}
|\nabla{u}| \leq \sqrt{3\alpha\beta}
\end{equation}
(and $|u|\leq \beta r$).
\end{lemma}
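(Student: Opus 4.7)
The plan is to establish a pointwise bound on the angle between $T_qX$ and $P$ throughout the connected component $U$ of $p$ in $X\cap C_{\delta,\beta}$, from which both the local graph structure and the gradient bound will follow. The angle bound is where the curvature hypothesis $|A|\le\alpha/r$ and the tube hypothesis (Hausdorff width $\beta r$) interact quantitatively; the rest is comparatively soft.

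For the first step, I would fix $q\in U$, let $\theta(q)\in[0,\pi/2]$ denote the largest angle that a unit tangent vector at $q$ makes with $P$, and pick $v\in T_qX$ realising it, so that $|P^\perp v|=\sin\theta(q)$. Set $e=P^\perp v/\sin\theta(q)\in P^\perp$ and let $\gamma:[-s_0,s_0]\to X$ be the unit-speed geodesic of $X$ with $\gamma(0)=q$, $\gamma'(0)=v$, where $s_0:=r\sin\theta(q)/\alpha$. The extrinsic acceleration $\gamma''=A(\gamma',\gamma')$ satisfies $|\gamma''|\le \alpha/r$, and $\langle v,e\rangle=\sin\theta(q)$, so
\[
\langle\gamma'(s),e\rangle\ \ge\ \sin\theta(q)-|s|\alpha/r\ \ge\ 0\qquad\text{for all }|s|\le s_0.
\]
For $\beta_0$ small depending on $\alpha$ and $\delta$, a short case analysis (either $s_0\le\delta r/2$ holds automatically, or one truncates at $\delta r/2$ and the same inequality below already forces $\sin\theta(q)$ to be small) guarantees that $\gamma$ stays inside $B^n(p,r)$, hence inside $X$. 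Integrating from $-s_0$ to $s_0$ and using that $\gamma(\pm s_0)\in X$ lies within $\beta r$ of $P$,
\[
2\beta r\ \ge\ \big\langle \gamma(s_0)-\gamma(-s_0),\,e\big\rangle\ \ge\ 2\!\int_0^{s_0}\!\left(\sin\theta(q)-s\alpha/r\right)ds\ =\ r\sin^2\theta(q)/\alpha,
\]
so $\sin^2\theta(q)\le 2\alpha\beta$.

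For the second step, the angle bound gives $\tan\theta(q)\le \sin\theta(q)/\sqrt{1-2\alpha\beta}\le\sqrt{3\alpha\beta}$ once $\beta_0$ is small enough. Transversality of $T_qX$ to $P^\perp$ at every $q\in U$ makes the orthogonal projection $\pi:U\to P$ a local diffeomorphism, and since $U\subseteq C_{\delta,\beta}$ has $P^\perp$-fibers contained in the compact cube $[-\beta r,\beta r]^{n-k}$, the map $\pi|_U$ is proper onto its image and hence a finite-sheeted covering. A path-lifting argument, starting with the local graph of $U$ near $p$ and using the uniform Lipschitz bound on the lift to keep it inside the tube, shows the covering has degree one. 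Consequently $U$ is the graph of a function $u$ on $\pi(U)\subseteq B^k(p,(1-\delta)r)$ with $|\nabla u|\le\sqrt{3\alpha\beta}$.

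The main obstacle is Step 1: the sharp constant $\sqrt{2\alpha\beta}$—which passes to $\sqrt{3\alpha\beta}$ for the tangent after trigonometry—relies on the specific choice $s_0=r\sin\theta(q)/\alpha$, two-sided integration of the geodesic equation, and careful selection of $\beta_0$ to keep the geodesic inside $B^n(p,r)$. Step 2 is essentially topological; the only subtlety is confirming that no extra sheets of $\pi|_U$ can arise before $\pi(U)$ exhausts its range, which follows from the local Lipschitz graph estimate and the fact that the tube is thinner than the ball.
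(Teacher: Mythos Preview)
Your proof is correct and follows the paper's two-step structure (angle bound, then graph), but executes each step differently. For the angle bound, the paper shoots a \emph{one-sided} geodesic in the direction of steepest normal ascent and compares its height $f(t)=\langle\gamma(t),e\rangle$ with a planar circle of radius $r/\alpha$ via the ODE inequality $f''\ge -(\alpha/r)\sqrt{1-(f')^2}$; bounding the circle's maximal rise by $\beta r$ gives $\cos\theta\ge 1-\alpha\beta$ and hence the stated estimate on $\tan\theta$. Your two-sided integration of the cruder linear bound $\langle\gamma',e\rangle\ge\sin\theta-|s|\alpha/r$ is more elementary (no ODE comparison) and produces the same leading term $\sin^2\theta\le 2\alpha\beta$; the only cost is the case analysis ensuring $\gamma([-s_0,s_0])\subseteq B^n(p,r)$, which the paper sidesteps by working from the smaller cylinder $C_{\delta/2,\beta}$. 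For the graph property, the paper argues by direct contradiction: if $P(x_1)=P(x_2)$ with $x_1\neq x_2$, a minimizing geodesic in $X\cap\overline{C_{\delta,\beta}}$ between them has nearly affine projection to $P$ (by the angle bound) and must exit through the lateral boundary before it can return, which is impossible. Your covering-space argument is a valid alternative; note that properness of $\pi|_U$ is legitimate here because the Hausdorff hypothesis already forces $X\subseteq P\times[-\beta r,\beta r]^{n-k}$, so the cube constraint in $C_{\delta,\beta}$ is vacuous and $X\cap C_{\delta,\beta}$ is genuinely open in $X$. One point you leave implicit but the paper verifies explicitly is that $\pi(U)=B^k(p,(1-\delta)r)$; this follows from the same local-graph continuation you invoke for path-lifting.
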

\begin{proof}
Assume w.l.g. $r=1$ and $p=0$ and denote $Q=P^\perp$. For $\beta$ sufficiently small $C_{\delta/4,\beta}\subseteq B(0,1)$ and $\alpha\beta<1$. Now, let $x\in C_{\delta/2,\beta}$ and let $\gamma(t)$ be a unit speed geodesic with $\gamma(0)=p$. We may assume w.l.g., by possibly changing the parametrization according to  $t\mapsto -t$, that $\left<\gamma'(0),e_n\right>=\max_{v\in Q,\;||v||=1}\left<\gamma'(0),v\right>$ and that $x_n(\gamma(t)) \geq 0$. Letting $f(t)=x_n(\gamma(t))$ we find $f'(t)=\left<\gamma'(t),e_n\right>$ and $f''(t)=\left<\gamma''(t),e_n\right>=\left<\gamma''(t),e_n-\left<\gamma'(t),e_n\right>\gamma'(t)\right>\geq -\alpha\sqrt{1-f'(t)^2}$. The equality case of the above ODE for $f'(t)$ corresponds to a circle of radius $\frac{1}{\alpha}$. Letting $\mu(t):\mathbb{R}\rightarrow \mathbb{R}^2$ be a clockwise and unit speed parametrized circle of radius $\frac{1}{\alpha}$ with $\mu(0)=(0,0)$ and $\left<\mu'(0),e_2\right>=f'(0)$ we see that as long as $x_2(\mu(t))$ is increasing, and as long as $\gamma(t)\in C_{\delta/2,\beta}$,  $x_n(\gamma(t))\geq x_2(\mu(t))$. For $\beta$ sufficiently small (depending on $\alpha,\delta$) $x_2(\mu(t))$ will reach its maximum at time $0<T<\delta/4$ so the extra condition $\gamma(t)\in C_{\delta/2,\beta}$ is redundant . Thus $x_2(\mu(t))\leq x_n(\gamma(t))\leq \beta$, and an easy calculation for circles in the plane gives the bound
\begin{equation}\label{almost_tan}
\tan\angle(T_xX,P) \leq \frac{\sqrt{2\beta\alpha -\alpha^2\beta^2}}{1-\alpha\beta} \leq \sqrt{3\alpha\beta}
\end{equation}
for $\beta$ sufficiently small. 

What remains to be shown is that the connected component of $p$ is indeed a graph. Assume there exist $x_1,x_2\in X\cap C_{\delta,\beta}$ with $x_1\neq x_2$ but $P(x_1)=P(x_2)$. Observe that by \eqref{almost_tan}, $X\cap \overline{C_{\delta,\beta}}$ is a sub-manifold with boundary. Let $\gamma:[0,a]\rightarrow X \cap \overline{C_{\delta,\beta}}$ be a minimizing geodesic between $x_1$ and $x_2$. Such a geodesic is always $C^1$ and is smooth for as long $\gamma(t)$ is away from the boundary. For such $t$ however $||P(\gamma''(t))|| \leq \sqrt{3\alpha\beta}\alpha$ by \eqref{almost_tan} and so for $\beta$ sufficiently small, and as $\gamma'(0)$ is almost parallel to $P$,  $P(\gamma(t)))$ is almost a straight line until it hits the boundary (at some $t<4$). Since $\gamma(t)$ is $C^1$, and intersects the boundary with an exterior normal component, this is a contradiction.

To see that for every $y\in B^n(0,1-\delta)$ there is some $x\in X$ with $P(x)=y$, note that by the Hausdorff condition, we can find $\bar{x}\in X\cap B(0,(1-\delta/2))$ with $d(\bar{x},y)\leq \beta$ (when $\beta$ is small). Taking $\bar{y}=P(\bar{x})$ we see, again, by \eqref{almost_tan} for $\bar{x}$, and the fact that the curvature scale $\frac{1}{\alpha}$ is far bigger than $\beta$, that there will exist a point over $y$ as well.             
\end{proof}

\subsection{Construction of a Flow}\label{conclus_sec}
 For $\de$ sufficiently small, if $X$ is $k$-dimensional $(\de,R)$ Reifenberg flat,  Theorem \ref{approx_smfd} and Corollary \ref{approx_smfd_add} provide us with smooth approximations to $X$ at different scales. The extension lemma, Lemma \ref{extension}, the interpolation Lemma, Lemma \ref{interpolation}, and the a-priori estimate, Theorem \ref{main_est} can substitute the corresponding result of \cite{Her} in the iteration scheme of  \cite[Sec. 3.2,3.3]{Her}. Thus, just like there we obtain the following uniform estimates.

\begin{theorem}[Uniform estimates]\label{un_est}
 There exist some $\de$ and $c_0,c_1,c_2,c_3$ such that if $X$ is $k$-dimensional $(\de,R)$-Reifenberg flat, and considering the approximating surfaces $X^r$ from Theorem \ref{approx_smfd} and Corollary \ref{approx_smfd_add}, each $X^r$ flows smoothly by $k$-dimensional mean curvature for time  $t\in [0,c_0 R^2]$ and for every $t\in [c_3r^2,c_0R^2]$ we have: (1) $|A^r(t)| \leq \frac{c_1}{\sqrt{t}}$ where $A^r(t)$  the second fundamental form of $X^r_t$. (2) $d_H(X^{r}_t,X) \leq c_2\sqrt{t}$. (3)  For every $x\in X$ and  $s\in (\frac{\sqrt{t}}{c_1},R/4)$ we have
\begin{equation}\label{uni_est_conn}
B(x,s)\cap X^r_t=G\cup B
\end{equation}
where $G$ is connected and $B\cap B(x,\frac{9}{10}s)=\emptyset$. Moreover, the constants $c_1,c_2$ satisfy $c_1^2 \leq \frac{1}{8}$ and $c_1c_2<\frac{1}{2}$.
\end{theorem}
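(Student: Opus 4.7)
The plan is to prove Theorem \ref{un_est} by an iterative bootstrap on dyadically increasing spatial scales, executing the scheme of \cite[Sec.~3.2-3.3]{Her} with the arbitrary-codimensional building blocks of this section (Corollary \ref{approx_smfd_add}, Lemma \ref{extension}, Lemma \ref{interpolation}, and Theorem \ref{main_est}) substituting their co-dimension-one counterparts. I fix dyadic scales $r_0=r$, $r_{j+1}=2r_j$, and time checkpoints $t_j=c_3 r_j^2$; the inductive claim is that $X^r_{\cdot}$ exists smoothly on $[0,t_j]$ and satisfies (1)-(3) at $t_j$.

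The base case is given by Theorem \ref{approx_smfd} (providing $|A^r|\le\delta/r$ and $d_H(X^r,X)\le\delta r$) together with Lemma \ref{extension}, which supplies smooth existence well past $t_0=c_3 r^2$; during this short interval nothing moves much, so (1)-(3) hold at $t_0$. For the inductive step, I fix $x\in X$ and the Reifenberg plane $P_{x,r_{j+1}}$. The Hausdorff bound (2), the Reifenberg condition, and the connectivity (3) from the induction hypothesis together place the relevant connected component of $X^r_{t_j}$ in a slab of width $\beta r_{j+1}$ over $P_{x,r_{j+1}}$, where $\beta$ is a fixed constant determined by $c_2$ and $\de$. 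Since the curvature bound (1) gives $|A|\le\alpha/r_{j+1}$ for a fixed $\alpha$, the Interpolation Lemma \ref{interpolation} writes the component as a graph with small gradient, and Theorem \ref{main_est} applies on $[t_j,t_{j+1}]$ (which is a fixed small fraction $\tau<\tau_0$ of $r_{j+1}^2$). This returns (1) at $t_{j+1}$; integrating the normal speed $|A|\le c_1/\sqrt{t}$ then yields (2); and (3) at scales $s\ge r_{j+1}$ follows from (2) combined with Corollary \ref{approx_smfd_add} for $X$.

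The main obstacle is arranging the numerical constants so the iteration closes with $c_1^2\le 1/8$ and $c_1 c_2<1/2$. The dominant curvature term in Theorem \ref{main_est} carries a contractive factor $(1+\delta)/\sqrt{\pi}$, and one must choose the scale-doubling ratio, $c_3$, the dwell time $\tau$, and the Reifenberg threshold $\de$ simultaneously so that (a) the gradient produced by Lemma \ref{interpolation} at each step stays below the $\de_0$ of Theorem \ref{main_est}; (b) the cylinder thickness condition $\tau<\tau_0$ is satisfied; (c) the resulting $c_1$ stays below $1/\sqrt{8}$ and $c_2$, obtained from integrating $c_1/\sqrt{s}$, below $1/(2c_1)$. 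This purely numerical closure was already carried out in \cite[Sec.~3.2-3.3]{Her} and transfers unchanged, since our Theorem \ref{main_est} has the same form as its co-dimension-one predecessor.
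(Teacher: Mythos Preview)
Your proposal is correct and takes essentially the same approach as the paper: the paper's own ``proof'' of Theorem~\ref{un_est} consists of exactly the assertion that Lemma~\ref{extension}, Lemma~\ref{interpolation}, and Theorem~\ref{main_est} can be substituted for their co-dimension-one counterparts in the iteration scheme of \cite[Sec.~3.2--3.3]{Her}, which is precisely what you outline in more detail. Your expanded sketch of the dyadic bootstrap (base case via Theorem~\ref{approx_smfd} and Lemma~\ref{extension}, inductive step via Lemma~\ref{interpolation} feeding into Theorem~\ref{main_est}, with the numerical closure deferred to \cite{Her}) accurately reconstructs what that scheme does.
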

By the uniform estimates one can pass to a sub-limit flow $X_t$ as in Theorem \ref{main_thm}. Note that the condition $iii$ of Theorem \ref{main_thm} follows easily from $1-3$ of Theorem \ref{un_est} (see \cite[Lem 4.4]{Her}).

\section{Uniqueness}\label{uniq_sec}
In this section, we will prove Theorem \ref{smooth_uniq} which, together with the existence part of Theorem \ref{main_thm}, imply the full Theorem \ref{main_thm}. In section \ref{Prelim} we will recall the definition and some properties of the high co-dimensional level set flow from \cite{AS}. In section \ref{dist_evolve} we will recall and further explore the behavior of the associated level set operator on distance functions from smooth evolutions by mean curvature. Section \ref{smooth_uniq_sec} will be devoted to the proof of Theorem \ref{smooth_uniq}.

\subsection{Preliminaries}\label{Prelim}
Let us start by introducing some notations. For every $0\neq p\in \mathbb{R}^{n}$  define $P_p$ to be the projection to the orthogonal complement of $p$. Given an $n\times n$ symmetric matrix $A$ and such $p$, let $X=P_pAP_p$. If we denote the eigenvalues corresponding to $p^\perp$  by $\lambda_1(X) \leq\ldots \leq \lambda_{n-1}(X)$, define
\begin{equation}
F(p,A)=\sum_{i=1}^{k}\lambda_i(X). 
\end{equation} 

In \cite{AS}, the level sets of positive viscosity solution to the equation 
\begin{equation}\label{visc_eq}
\frac{d}{dt} u=F(\nabla u, \nabla^2 u)
\end{equation} 
were used to give a definition for weak mean curvature flow. Before diving into formalities, we will try to convince the reader that this approach is plausible. Let $(M_t)_{t\in[0,T]}$ be a smooth family of $k$-dimensional sub-manifolds of  $\mathbb{R}^n$. Let $u:\mathbb{R}^n\times [0,T]\rightarrow \mathbb{R}_+$ be a smooth function such that for every $t\in [0,T]$, $M_t=\{x\in \mathbb{R}^n \;\;\mathrm{s.t.}\;\; u(x,t)=0\}$ and such that $\nabla u \neq 0$ on a neighborhood of $M_t$, and $u\geq \delta_0>0$ outside that neighborhood. For $\de$ sufficiently small, $M^\de_t=\{x\in \mathbb{R}^n \;\;\mathrm{s.t.}\;\; u(x,t)=\de\}$ would be a smooth ``tubular'' hypersurface  around $M_t$. We would expect it to have $n-k-1$ principal curvatures that are very large, corresponding to ellipsoids in the orthogonal complement of $T_pM$ for $p\in M_t$. The other $k$ principal curvatures should be very close to the ones of $M_t$ w.r.t. the normal of $M^\de_t$, as for every geodesics $\gamma(s)$ of  $M_t$ and every point on $x\in M^\de_t$ which is closest to $\gamma(0)$,  there should be an almost geodesic curve in $M^\de_t$ which is orthogonal to the above mentioned ellipsoids which ``traces $\gamma$'' . The second fundamental form of $M^\de_t$ at $x\in M^\de_t$ is given by $A(x,t)=\frac{1}{|\nabla u |}P_{\nabla u(x,t)}\nabla^2u(x,t)P_{\nabla u(x,t)}$, so we expect $\frac{1}{||\nabla u||}F(\nabla u,\nabla^2u)$ to be very close to $-\vec{H}(\pi(x),t)\cdot \frac{\nabla u}{||\nabla u||}$, where $\vec{H}(\pi(x),t)$ is the mean curvature of $M_t$ at the point closest to $x$. On the other hand, the normal velocity of $M^\de_t$ at $x$ is given by $-u_t/||\nabla u||$ and so equation \eqref{visc_eq} tells us that, parameterizing the flow of $M_t^\de$ by $\phi^\de(x,t)$ and letting $\nu(x,t)$ be the normal to $M_t^\de$ at $x$, we have 
\begin{equation}
\frac{d}{dt}\phi^\de(x,t)\cdot \nu = \vec{H}(\pi(x),t)\cdot \nu+O(\de). 
\end{equation}
Thus, the entire ellipsoid around $\pi(x)$ moves approximately by $\vec{H}(\pi(x))$, which would correspond to a motion of $M_t$ by a $k$-dimensional mean curvature flow.\\

We proceed by defining the level-set flow and by collecting some of its properties  from \cite{AS}. Let $u_0$ be a non-negative, uniformly continuous function. Theorem 2.4 in \cite{AS} states that there exists a unique uniformly continuous, positive viscosity solution to equation \eqref{visc_eq} $u:\mathbb{R}^n\times [0,\infty)$ such that $u(-,0)=u_0$ (for the definition of viscosity solution, see \cite[Def. 2.1]{AS}). 
\begin{definition}\cite[Def 2.6]{AS}
Let $X\subseteq \mathbb{R}^n$ be a closed set. Let $u_0$ be a non-negative, uniformly continuous function such that $X=\{x\in \mathbb{R}^{n} \;\;\textrm{s.t.}\;\; u_0(x)=0\}$. Letting $u$ be the solution of the IVP of equation \eqref{visc_eq} with $u(x,0)=u_0(x)$, the $k$-dimensional \textbf{level-set flow} of $X$ is defined to be $X_t=\{x\in \mathbb{R}^{n} \;\;\textrm{s.t.}\;\; u(x,t)=0\}$.
\end{definition}
A-priori, this definition may depend on the choice of $u_0$, but Theorem 2.5 of \cite{AS} shows that different choices yield the same result.\\

The following three properties of the level-set flow and the level-set equation would be of importance to us.
\begin{property_a}\label{prop_a}
If $X$ is a smooth $k$-dimensional sub-manifolds of $\mathbb{R}^n$ and the smooth  mean curvature flow  $(X_t)_{t\in [0,T]}$ is defined for all $t\in [0,T]$ then $X_t$ is also the level-set flow of $X$ (see \cite[Cor. 3.9]{AS}). 
\end{property_a}
\begin{property_b}
If $u,v:\mathbb{R}^n\times [0,T]\rightarrow \mathbb{R}_+$ are two non-negative, uniformly continuous solutions to \eqref{visc_eq} then $||u-v||_{L^{\infty}(\mathbb{R}^n\times[0,T])} \leq ||u-v||_{L^{\infty}(\mathbb{R}^n\times\{0\})}$ (see \cite[Thm. 2.2]{AS}). 
\end{property_b}
\begin{property_c}\label{prop_c} 
Let $\Omega \subseteq \mathbb{R}^n \times [0,T]$ be a bounded domain, and let $v,u$ be non-negative, uniformly continuous functions on $\mathbb{R}^n\times [0,T]$ such that $v$ is a sub-solution to \eqref{visc_eq} in $\Omega$ and $u$ is a solution of \eqref{visc_eq} in $\Omega$. If $u\leq v$ on $\partial_{\mathrm{par}} \Omega$ the $u\leq v$ on $\Omega$. Here $\partial_{\mathrm{par}} \Omega$ is the parabolic boundary of $\Omega$ (see remark below).    
\end{property_c}
\begin{remark}
In \cite[Thm. 2.2]{AS} it was shown that $F$ satisfies the assumptions of \cite[Thm. 2.1]{GGIS}, and of Thm. 4.1 of \cite{CGG}, both of which imply Property C for domains of the form $\Omega=D\times [0,T]$ where $D\subseteq \mathbb{R}^n$ is a compact domain. The proof of Thm. 4.1 in \cite{CGG} works just as well for an arbitrary bounded domain $\Omega$, while compactness also gives Prop. 2.3 of \cite{GGIS}, from which point the proof of Thm. 2.1 of \cite{GGIS} works for arbitrary such $\Omega$ as well. This is, of course,  not surprising at all, as the weak maximum principle is a statement about the interior.       
\end{remark}

\subsection{Evolution of Distances}\label{dist_evolve}
Let $(M_t)_{t\in [0,T]}$ is a family of $k$-submanifolds evolving by smooth mean curvature flow. For each $0\leq t \leq T$, $M_t$ has a tubular neighborhood at which the distance function from $M_t$, $r_t$ is smooth. Studying the properties of $F(\nabla r_t,\nabla^2 r_t)$ and of $\frac{d}{dt}r_t-(\nabla r_t,\nabla^2 r_t)$ will occupy the first part of this section. In the second part, we will give a simple lower bound, which is based on avoidance of balls, for the solution of the level-set equation starting from a distance function from an \textit{arbitrary} set.\\

\begin{lemma}\label{dist_tub_stat}
Let $M$ be smooth, $k$-dimensional sub-manifold in $\mathbb{R}^{n}$ with normal injectivity radius $\rho$ and let $r$ be the distance function from $M$. For every $0\leq s \leq \rho$, let $M^s$ be the smooth co-dimension $1$ level set $M^s=\{y\in \mathbb{R}^{n}\;\;\mathrm{s.t}\;\;r(y)=s\}$ and let $A^s$ be the second fundamental form of $M^s$ with respect to the interior normal. Let $x\in M$, $\nu\in SN_xM$ (the unit normal space to $M$ at $x$), $P=T_xM$ and $Q=(P\oplus \nu)^\perp$. The following hold:
\begin{enumerate}
\item for every $0<s<\rho$ the principal directions of $M^s$ at $x+s\nu$ are independent of $s$ and split according to $P$ and $Q$.
\item $A^s|_{Q}=\frac{1}{s}Id$ while $A^s|_P$ is bounded inside the tubular neighborhood, and if $v_1,\ldots v_k\in P$ are the principal directions of  $A^{-\nu}(;)=\left<A(;), -\nu\right>$ with eigenvalues $\beta_1,\ldots, \beta_k$ then 
\begin{equation}\label{ev_prop}
A^s|_{P}(v_i,v_i)=\frac{\beta_i}{1+s\beta_i},
\end{equation} 
and $A^s|_P(v,v) <\frac{1}{s}$ for every $v\in P$ with $||v||=1$.
\end{enumerate}
\end{lemma}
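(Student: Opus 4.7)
The plan is to parametrize a neighborhood of $y = x + s\nu$ inside $M^s$ using two complementary families of curves adapted to the splitting $N_x M = \mathbb{R}\nu \oplus Q$: an ``angular'' family coming from the sphere of radius $s$ in the affine normal plane through $x$, accounting for $Q$, and a ``parallel-tube'' family coming from moving along a geodesic of $M$ while parallel transporting $\nu$ in the normal bundle, accounting for $P$. In each case I read off the principal direction and principal curvature from the first and second derivatives of the curve, using that for any curve $\sigma(t) \subset M^s$ one has $A^s(\sigma'(0),\sigma'(0)) = \langle \sigma''(0), -\nu\rangle$, since the ambient connection is flat.

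For the $Q$ directions, first I observe that the round sphere $S_s := x + s\cdot SN_xM$ is contained in $M^s$, because each of its points lies at distance exactly $s$ from $x \in M$ and, by the normal injectivity radius assumption, has $x$ as its nearest point in $M$. For a unit $w \in Q$, the great circle $t \mapsto x + s(\cos(t/s)\nu + \sin(t/s)w)$ lies in $M^s$, has tangent $w$ at $t=0$ and acceleration $-\nu/s$, so pairing with $-\nu$ gives $A^s(w,w) = 1/s$. This both identifies $Q$ as an $s$-independent set of principal directions and delivers $A^s|_Q = \frac{1}{s}\mathrm{Id}$.

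For the $P$ directions, let $\gamma$ be a $v_i$-geodesic in $M$ through $x$ and let $E(t)$ be the normal-bundle-parallel transport of $\nu$ along $\gamma$. Then $\sigma(t) := \gamma(t) + sE(t)$ lies in $M^s$. The Weingarten identity $(D_{v_i}E)^T = -S_\nu(v_i)$ combined with $(D_{v_i}E)^\perp = 0$ (normal-parallelism at $t=0$) gives $E'(0) = \beta_i v_i$, so $\sigma'(0) = (1 + s\beta_i)v_i$; this exhibits the principal direction $v_i$, independent of $s$ up to scale. For $\sigma''(0)$ I would use the geodesic equation $\gamma''(0) = A(v_i,v_i)$ and differentiate $|E|^2 \equiv 1$ twice to obtain $\langle E''(0), \nu\rangle = -|E'(0)|^2 = -\beta_i^2$; pairing the total $\sigma''(0) = \gamma''(0) + sE''(0)$ with $-\nu$ yields $\beta_i(1+s\beta_i)$, and dividing by $|\sigma'(0)|^2 = (1+s\beta_i)^2$ gives the principal curvature $\beta_i/(1+s\beta_i)$.

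Together these steps produce $k + (n-k-1) = n-1$ mutually orthogonal principal directions of $A^s$ at $y$, $s$-independent as vectors, splitting along $P$ and $Q$; this proves (1) and the eigenvalue formula \eqref{ev_prop}. The strict inequality $A^s|_P(v,v) < 1/s$ then reduces to the elementary fact that $\beta/(1+s\beta) < 1/s$ whenever $1 + s\beta > 0$, which is ensured throughout the tubular neighborhood $s < \rho$. There is no serious obstacle; the main point requiring care is bookkeeping of sign conventions — keeping straight the outward normal $\nu$ of $M^s$, the interior normal $-\nu$, and the relation between the Weingarten operator $S_\nu$ and the bilinear form $A^{-\nu}$ whose eigenvalues $\beta_i$ appear in the statement.
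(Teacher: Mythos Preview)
Your argument is correct and essentially identical to the paper's: both treat the $Q$-directions via the normal sphere through $x$, and the $P$-directions via the curve $\gamma(t)+s\nu(t)$ with $\nu(t)$ the normal-parallel transport of $\nu$ along a geodesic $\gamma$ in $M$, then compute $\langle \sigma''(0),-\nu\rangle$ using $\gamma''(0)=A(v,v)$ and the identity $\langle \nu''(0),-\nu\rangle=|\nu'(0)|^2$ coming from $|\nu|\equiv 1$. The only minor difference is that the paper carries out the $P$-computation for a general $v=\sum a_i v_i$, obtaining the full quadratic form $A^s(\dot\mu(0),\dot\mu(0))=\sum a_i^2\beta_i+s\sum(a_i\beta_i)^2$ (which by polarization also shows $A^s|_P$ is diagonal in the $v_i$ and gives the strict inequality directly), whereas you compute only along each $v_i$ and then invoke the spectral theorem; both routes are fine.
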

\begin{remark}
Most of the above lemma is stated and proved in \cite[Thm 3.2]{AS}. Equation \eqref{ev_prop} was proved for some constants $\beta_i$, without identifying them as the principal directions of $-A(;)\cdot \nu$. This  fact  was not needed for the applications therein, and was stated there as a conjecture (which the authors did not care much about).        
\end{remark} 
\begin{proof}[Proof of Lemma. \ref{dist_tub}]
$(x+P^\perp)\cap M^s$ consist of a sphere of radius $s$ in $x+P^\perp$, whose normal at $x+s\nu$ is $-\nu$, so $A^s|_{Q}=\frac{1}{s}$ as stated. Fix $v=\sum a_iv_i\in P$, and let $\gamma(t)$ be a unit speed geodesic in $M$ with $\gamma(0)=x$ and $\dot{\gamma}(0)=v$, and let $\nu(t)$ be a normal field along $\gamma(t)$, which solves the linear system of ODEs  $\nu(0)=\nu$ and $N_{\gamma(t)}M(\dot{\nu}(t))=0$ in the normal bundle to $M$. In particular $||\nu(t)||=||\nu(0)||=1$, so $\mu(t):=\gamma(t)+s\nu(t)$ is a curve in $M^s$ such that
 \begin{equation}
||\dot{\mu}(0)||^2=||v+sA^{-\nu}(v,v_i)v_i||=\sum \left(a_i(1+s\beta_i)\right)^2
\end{equation}
and
\begin{equation}
A^s(\dot{\mu}(0),\dot{\mu}(0))=\left< \ddot{\mu}(0),-\nu\right>=A^{-\nu}(v,v)+s||\dot{\nu}(0)||^2=\sum a_i^2\beta_i+s\sum (a_i\beta_i)^2.
\end{equation}
As $s<\rho < \frac{1}{|\beta_i|}$, $1+s\beta_i>0$ and so
\begin{equation}
\frac{\sum a_i^2\beta_i+s\sum (a_i\beta_i)^2}{\sum \left(a_i(1+s\beta_i)\right)^2} <\frac{1}{s}
\end{equation}
holds. Additionally,
\begin{equation}
A^s(v_i,v_i)=\frac{\beta_i+s\beta_i^2}{(1+s\beta_i)^2}=\frac{\beta_i}{1+s\beta_i}.
\end{equation}
\end{proof}

\begin{lemma}\label{dist_tub}
Let $N_t$ be a tubular neighborhood of a smooth $k$-dimensional mean curvature flow $M_t$ then, $r(x,t)$ be the distance function from $M_t$ and $v_1(x,t),\ldots, v_k(x,t)$ be the principal directions of $M_t$ at $\pi(x,t) \in M_t$ w.r.t. the normal $-\nabla r(x)$ (where $\pi(x,t)$ is the nearest point projection to $M_t$). Then   
\begin{equation}
\frac{d}{dt}r-F(\nabla r,\nabla^2 r)=\left(\sum_{i=1}^{k}\frac{\left<A(v_i,v_i),-\nabla  r\right>^2}{1+r\left<A(v_i,v_i),-\nabla r\right>} \right)r
\end{equation}
on $N_t-M_t$.
\end{lemma}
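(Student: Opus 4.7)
The plan is to compute the two terms $\frac{d}{dt}r$ and $F(\nabla r,\nabla^2 r)$ separately at a point $(x,t)\in N_t\setminus M_t$, express both in terms of the quantities $\beta_i:=\langle A(v_i,v_i),-\nabla r\rangle$, and then subtract. Write $s=r(x,t)$, $p=\pi(x,t)\in M_t$, and $\nu=\nabla r(x,t)\in SN_pM_t$.

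For the spatial term, observe that since $|\nabla r|\equiv 1$ on $N_t\setminus M_t$, the projected Hessian $P_{\nabla r}\nabla^2 r\, P_{\nabla r}$ is exactly the second fundamental form $A^s$ of the co-dimension-one level set $M_t^s=\{r(\cdot,t)=s\}$ (taken with respect to the interior unit normal $-\nabla r$). Applying Lemma \ref{dist_tub_stat} to $M=M_t$, its eigenvectors split according to $P=T_pM_t$ and $Q=(P\oplus\nu)^{\perp}$: on $Q$ the $n-k-1$ eigenvalues all equal $1/s$, while on $P$ the $k$ eigenvalues are $\beta_i/(1+s\beta_i)$, and the lemma guarantees each of these lies strictly below $1/s$. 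Consequently, the $k$ smallest eigenvalues of $P_{\nabla r}\nabla^2 r\,P_{\nabla r}$ are precisely those coming from $P$, giving
\begin{equation}
F(\nabla r,\nabla^2 r)=\sum_{i=1}^{k}\frac{\beta_i}{1+s\beta_i}.
\end{equation}

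For the time term, let $\Psi:M\times[0,T]\to\mathbb{R}^n$ be the MCF parametrization and, for fixed $x$, let $p(t)\in M$ be the pre-image of the nearest point $q(t)=\Psi(p(t),t)$ on $M_t$. Differentiating $r(x,t)=|x-q(t)|$ yields $\frac{d}{dt}r=-\nabla r\cdot \dot{q}(t)$, and since $\dot{q}=\partial_t\Psi+d\Psi(\dot p)=\vec{H}(q,t)+(\text{tangent to }M_t)$ while $\nabla r$ is normal to $M_t$ at $q$, the tangential correction drops out and $\frac{d}{dt}r=-\nabla r\cdot \vec{H}(p,t)$. Expanding $\vec{H}=\sum_i A(v_i,v_i)$ in the orthonormal principal frame gives $\frac{d}{dt}r=\sum_{i=1}^{k}\beta_i$.

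Subtracting and bringing to a common denominator,
\begin{equation}
\frac{d}{dt}r-F(\nabla r,\nabla^2 r)=\sum_{i=1}^{k}\left(\beta_i-\frac{\beta_i}{1+s\beta_i}\right)=s\sum_{i=1}^{k}\frac{\beta_i^2}{1+s\beta_i},
\end{equation}
which is precisely the claimed identity once $s$ is rewritten as $r$ and $\beta_i$ as $\langle A(v_i,v_i),-\nabla r\rangle$. The only step that requires genuine care is identifying the $k$ smallest eigenvalues of the projected Hessian with those along $P$; this rests entirely on the strict inequality $\beta_i/(1+s\beta_i)<1/s$ supplied by Lemma \ref{dist_tub_stat}, so the ordering used to define $F$ picks out exactly the tangential eigenvalues. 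Once that identification is made, the rest of the proof is bookkeeping plus the standard computation of how a distance function evolves under motion of its zero set.
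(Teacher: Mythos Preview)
Your proof is correct and follows essentially the same route as the paper's: both compute $F(\nabla r,\nabla^2 r)$ by invoking Lemma~\ref{dist_tub_stat} to identify the $k$ smallest eigenvalues of $P_{\nabla r}\nabla^2 r\,P_{\nabla r}$ as the tangential ones $\beta_i/(1+r\beta_i)$, compute $\partial_t r=\langle\vec H,-\nabla r\rangle=\sum_i\beta_i$ from the mean curvature flow equation, and subtract. Your write-up is slightly more explicit about the chain-rule derivation of $\partial_t r$ and the final algebraic simplification, but the argument is the same.
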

\begin{proof}
Fix $t$, $x\in N_t-M_t$ and let $p=\pi(x,t)$. By the definition of $F$ and the fact that $||\nabla r||=1$, $F(\nabla r(x,t),\nabla^2r(x,t))$ is the sum of the $k$ smallest principal curvatures of $M^r_t$ at $x$. By Lemma \ref{dist_tub_stat}, those principal curvatures correspond to vectors in $T_{\pi(x,t)}M_t$, so since the trace of a matrix is independent of the basis we get, again by Lemma \ref{dist_tub_stat},  
\begin{equation}
F(\nabla r,\nabla^2 r)=\sum_{i=1}^{k}\frac{\left<A(v_i,v_i),-\nabla  r\right>}{1+r\left<A(v_i,v_i),-\nabla r\right>}. 
\end{equation} 
On the other hand, since $M_t$ moves by mean curvature, by the first variation of length we get
\begin{equation}
\frac{d}{dt}r(x,t)=\left<\vec{H}(\pi(x,t),t),-\nabla r\right>=\sum_{i=1}^{k}\left<A(v_i,v_i),-\nabla  r\right>.
\end{equation}
The result follows.
\end{proof}

\vspace{5 mm}

The following lower bound on the solution of the level set flow starting from a distance function from a set will be used in the proof of Theorem \ref{smooth_uniq}. As discussed above, it is based on the fact that although arbitrary co-dimensional mean curvature flow does not satisfy avoidance, it does satisfy it w.r.t. (co-dimension one) balls, moving according to the sum of their lowest $k$ principal curvatures (which for balls are, of course, the same).  For Brakke flows, this fact was already observed in Brakke's original manuscript \cite{Bra}. In a sense, the following lemma shows it for the Ambrosio-Soner level-set flow.      

\begin{lemma}\label{lower_bound}
Let $X$ be a closed set and let $p$ be a point with $d(X,p)=R$. Let $g$ be the distance function from $X$  and consider the level set flow $u$ starting from $g$. If $R^2>2kt$ then 
\begin{equation}
u(p,t) \geq R-\sqrt{2kt}
\end{equation} 
\end{lemma}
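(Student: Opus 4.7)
The plan is to construct an explicit radially symmetric classical subsolution of the level set equation \eqref{visc_eq} which sits below $g$ at time zero and which takes the value $R-\sqrt{2kt}$ at $p$. Specifically, consider
\begin{equation}
v(y,t) = R - \sqrt{d(y,p)^2 + 2kt}.
\end{equation}
This function is smooth for $t>0$, radial around $p$, and its positive level sets $\{v=c\}$ (with $c<R-\sqrt{2kt}$) are the round spheres of radius $\sqrt{(R-c)^2-2kt}$ around $p$, which is exactly how a co-dimension one sphere of radius $\rho$ evolves under the sum of its $k$ smallest principal curvatures (namely $\rho(t)=\sqrt{\rho_0^2-2kt}$). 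So one expects $v$ to solve \eqref{visc_eq} identically, and a direct computation confirms it: writing $r=d(y,p)$ and $s=\sqrt{r^2+2kt}$, one gets $\partial_t v=-k/s$, $\nabla v=-(r/s)\hat r$, and the Hessian $\nabla^2 v$ has eigenvalue $-2kt/s^3$ in the radial direction and $-1/s$ repeated $n-1$ times tangentially. Hence $P_{\nabla v}\nabla^2 v P_{\nabla v}$ has all eigenvalues equal to $-1/s$ (on $\nabla v^{\perp}$), so the sum of the $k$ smallest is $-k/s=\partial_t v$. At $y=p$ the gradient vanishes but one verifies the same equality directly in the limit, so $v$ is a classical (hence viscosity) solution to \eqref{visc_eq} on $\mathbb{R}^n \times (0,\infty)$.

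Next I would show $v(\cdot,0) \leq g$. Since $g(p)=R$ and $g$ is $1$-Lipschitz, the reverse triangle inequality gives $g(y)\geq R-d(y,p)=v(y,0)$, and since $g\geq 0$ we in fact have $g\geq v_+$ pointwise at $t=0$, where $v_+=\max(v,0)$. As the pointwise maximum of the two viscosity subsolutions $v$ and $0$, the function $v_+$ is itself a viscosity subsolution of \eqref{visc_eq}.

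With $v_+$ in hand, apply Property~C on the bounded parabolic cylinder $\Omega=B(p,M)\times [0,T]$ for any $T<R^2/(2k)$ and any $M$ chosen so large that $\sqrt{M^2+2kT}>R$, so that $v_+\equiv 0$ on the lateral boundary $\partial B(p,M)\times [0,T]$. On the parabolic boundary of $\Omega$ we then have $v_+\leq u$ (from $v_+(\cdot,0)\leq g$ on the bottom and $v_+=0\leq u$ on the sides), so Property~C gives $v_+\leq u$ throughout $\Omega$. Evaluating at $(p,t)$ for $t<T$ yields
\begin{equation}
u(p,t)\geq v_+(p,t)=R-\sqrt{2kt},
\end{equation}
which is the desired bound (and $T<R^2/(2k)$ was arbitrary).

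The main subtlety to watch is the viscosity-theoretic justification that $v$ is a genuine subsolution of the Ambrosio--Soner equation at the point $y=p$, where $\nabla v=0$ and $F$ is a priori not continuously defined; the workaround is either the observation that $\nabla^2 v$ is a multiple of the identity at $p$ (so any choice of the $k$-dimensional subspace in the definition of $F$ gives the same value and matches $\partial_t v$ classically), or the formal route via the semicontinuous envelopes used in \cite{AS}. Once this point is addressed, the rest of the argument is a straightforward application of comparison on a large cylinder, with the geometric content entirely captured by the avoidance of shrinking spheres that motivates the construction of $v$.
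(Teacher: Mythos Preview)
Your proof is correct and rests on the same geometric idea as the paper's---avoidance of shrinking spheres centered at $p$---but the packaging differs. The paper fixes $t$, modifies the initial datum to $\tilde g$ (equal to the cone $R-d(\cdot,p)$ on $B(p,\sqrt{2kt})$ and truncated at level $R-\sqrt{2kt}$ outside), and then uses the relabeling invariance of the level set flow (\cite[Thm.~2.5]{AS}) together with the known evolution of round spheres to read off $\tilde u(p,t)=R-\sqrt{2kt}$ and conclude $u(p,t)\geq \tilde u(p,t)$. You instead write down the explicit space--time barrier $v(y,t)=R-\sqrt{d(y,p)^2+2kt}$, check it is a classical (hence viscosity) solution, take $v_+$, and apply Property~C on a large cylinder. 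Your route is slightly more self-contained (it avoids invoking \cite[Thm.~2.5]{AS}) at the cost of the extra verification at $y=p$; the paper's route is terser but leans on more of the machinery from \cite{AS}. Your handling of the gradient-vanishing point is the right one: since $\nabla^2 v(p,t)$ is a scalar multiple of the identity, $F(q,\nabla^2 v(p,t))$ is independent of $q\neq 0$, so the semicontinuous envelopes coincide there and equal $\partial_t v$.
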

\begin{proof}
Let $\tilde{g}(y)$ be a function which equals $\min\{d(X,y),R-\sqrt{2kt}\}$ on $\mathbb{R}^{n}-B(p,\sqrt{2kt})$  and $R-d(p,y)$ on $B(p,\sqrt{2kt})$. Letting $\tilde{u}$ be the solutions to the level-set equation corresponding to $\tilde{g}$, for every $0\leq c < R-\sqrt{2kt}$, $\{x\;\;|\;\; u(x,t)=c\}=\{x\;\;|\;\; \tilde{u}(x,t)=c\}$ while by continuity and by the known evolution of spheres $\tilde{u}(p,t)=R-\sqrt{2kt}$. Thus $u(p,t)\geq R-\sqrt{2kt}$.    
\end{proof}

\subsection{Conclusion}\label{smooth_uniq_sec}
\begin{proof}[Proof of Thm. \ref{smooth_uniq}]
Set $X_0=X$, consider the functions $r(x,t)=\mathrm{dist}(x,X_t)$ and $v(x,t)=r^2(x,t)/\sqrt{t}$ and let $N=\{(x,t)\in \mathbb{R}^{n}\times[0,T]\;\;| r(x,t) < \frac{\sqrt{t}}{4c_1}\}$ and $N_t=N\cap \left(\mathbb{R}^n\times \{t\}\right)$. $v$ is smooth in $N$ and by Lemma \ref{dist_tub} 
\begin{equation}
\frac{d}{dt}v-F(\nabla v,\nabla^2 v)=2\left(\sum_{i=1}^{k}\frac{\left<A(v_i,v_i),-\nabla  r\right>^2}{1+r\left<A(v_i,v_i),-\nabla r\right>}-\frac{1}{4t} \right)v
\end{equation}
where $v_i$ are the principal directions of $X_t$ w.r.t. the normal $-\nabla r(x)$. Furthermore by the assumptions of the Theorem,  for every $(x,t)\in N$ we have 
\begin{equation}
\left|r\left<A(v_i,v_i),-\nabla r\right>\right|\leq \frac{c_1}{\sqrt{t}}\cdot \frac{\sqrt{t}}{4c_1}< \frac{1}{2},\;\;\;\;\;\;\;\;\;\;\;\sum_{i=1}^{k}\left<A(v_i,v_i),-\nabla  r\right>^2 \leq \frac{c_1^2}{t},
\end{equation}
and since $c_1^2 \leq 1/8$ we find that $v$ is a sub-solution to equation \eqref{visc_eq} in $N$. 

Let $d(x)$ be the distance function from $X=X_0$ and let $u$ be the solution to \eqref{visc_eq} with the initial data $d$. For $(x,t)\in \partial_{\mathrm{par}} N$ we have 
\begin{equation}
d(x) \geq r(x,t)-d_H(X,X_t) \geq \left(\frac{1}{4c_1}-c_2\right)\sqrt{t}>\sqrt{2kt}  
\end{equation}
so by Lemma \ref{lower_bound}
\begin{equation}
u(x,t) \geq \left(\frac{1}{4c_1}-c_2-\sqrt{2k}\right)\sqrt{t}\geq \alpha v(x,t)
\end{equation}
for $\alpha=16c_1^2\left(\frac{1}{4c_1}-c_2-\sqrt{2k}\right)>0$. Thus, by Property C, $u \geq \alpha v$ on $N$. The first part of the above argument also shows that $u>0$ outside of $N$. In particular, as the level set flow of $X$, $\tilde{X}_t$ is defined to be the zero set of $u$, and as $u>0$ on $\mathbb{R}^n-N_t$ and $u\geq \alpha v>0$ on $N_t-X_t$ we see that $\tilde{X}_t \subseteq X_t$.

The inclusion $X_t \subseteq \tilde{X}_t$ is is simpler (and in the co-dimension one case, follows immediately from Ilmanen's definition). Suppose $u(x_0,t_0)=\delta > 0$ for some $0\leq t_0 \leq T$ and $x_0 \in X_{t_0}$. For every $s<t_0$, the  distance function from $X_s$, $d^s(x)=r(x,s)$ satisfies $||d^s-d||_{L^{\infty}(\mathbb{R}^n)} \leq c_2\sqrt{s}$. Denote by $u^s$ the solution for the level set equation emanating from $d^s$. Then 
\begin{equation}
0<\delta=u(x_0,t_0) \leq |u(x_0,t_0)-u(x_0,t_0-s)|+|u(x_0,t_0-s)-u^s(x_0,t_0-s)|
\end{equation}  
where we have used the fact that $u^s(x_0,t_0-s)=0$ by Property A. The first term of the left hand side goes to zero as $s\rightarrow 0$ since $u$ is continuous, while the second term also goes to zero since $||u^s-u||_{L^{\infty}(\mathbb{R}^n\times[0,T])} \leq ||d^s-d||_{L^{\infty}(\mathbb{R}^n)}$ by Property B. This is a contradiction, so if $x_0\in X_{t_0}$ we must have $u(x_0,t_0)=0$, i.e. $x_0\in \tilde{X}_{t_0}$.
\end{proof}

\bibliographystyle{alpha}
\bibliography{ReifBib}

\vspace{10mm}
{\sc Or Hershkovits, Courant Institute of Mathematical Sciences, New York University, 251 Mercer Street, New York, NY 10012, USA}\\

\emph{E-mail:}  or.hershkovits@cims.nyu.edu

\end{document}